\numberwithin{equation}{section}
\theoremstyle{plain}
\newtheorem{theorem}{Theorem}[section]
\newtheorem{lemma}{Lemma}[section]
\newtheorem{proposition}{Proposition}[section]
\newtheorem{example}{Example}[section]
\begin{document}

\begin{frontmatter}

\title{CONSISTENCY of SPARSE PCA IN HIGH DIMENSION, LOW SAMPLE SIZE CONTEXTS}
\runtitle{CONSISTENCY of SPARSE PCA IN HDLSS}


\begin{aug}
\author{\fnms{Dan} \snm{Shen}\thanksref{m1,t1}\ead[label=e1]{dshen@email.unc.edu}},
\author{\fnms{Haipeng} \snm{Shen}\thanksref{t2}\ead[label=e2]{haipeng@email.unc.edu}}
\and
\author{\fnms{J. S.} \snm{Marron}\thanksref{t3}
\ead[label=e3]{marron@email.unc.edu}}

\thankstext{m1}{Corresponding Author}
\thankstext{t1}{Partially supported by NSF grant DMS-0854908}
\thankstext{t2}{Partially supported by NSF grants DMS-0606577 and CMMI-0800575}
\thankstext{t3}{Partially supported by NSF grants DMS-0606577 and DMS-0854908}
\runauthor{Dan Shen, Haipeng Shen and J. S. Marron}

\affiliation{University of North Carolina at Chapel Hill}

\address{Department of Statistics and Operations Research\\
University of North Carolina at Chapel Hill\\
Chapel Hill, NC 27599\\
\printead{e1}\\
\phantom{E-mail:\ }\printead*{e2}\\
\phantom{E-mail:\ }\printead*{e3}
}
\end{aug}

\begin{abstract}
Sparse Principal Component Analysis (PCA) methods are efficient tools to reduce the dimension (or the number of variables)
of complex data.  Sparse principal components (PCs) are easier to interpret than conventional PCs, because most loadings are zero.  We study the asymptotic properties of these sparse PC directions for scenarios with fixed sample size and increasing dimension (i.e. High Dimension,
Low Sample Size (HDLSS)). Under the previously studied spike covariance assumption, we show that Sparse PCA remains consistent under the same large spike condition that was previously established for conventional PCA. Under a broad range of small spike conditions, we find a large set of sparsity
assumptions where Sparse PCA is consistent, but PCA is strongly inconsistent. The boundaries of the consistent region are clarified using an oracle result.
\end{abstract}

\begin{keyword}[class=AMS]
\kwd[Primary ]{62H25}
\kwd[; secondary ]{62F12}
\end{keyword}

\begin{keyword}
\kwd{Sparse PCA}
\kwd{High Dimension}
\kwd{Low Sample Size}
\kwd{Consistency}
\end{keyword}

\end{frontmatter}


\section{Introduction}\label{sec:01}

Principal Component Analysis (PCA) is  an important visualization and dimension
 reduction tool for High Dimension, Low Sample Size (HDLSS) data.
However, the linear combinations found by PCA typically will involve all the variables, with non-zero loadings, which can be challenging to interpret.
In order to overcome this weakness of PCA, we will study sparse PCA methods that generate sparse principal components (PCs), i.e. PCs with only a few non-zero loadings. Several sparse PCA methods have been proposed to facilitate the interpretation of HDLSS data, see for example Zou, Hastie and Tibshirani (2006)~\cite{zou2006sparse},
Shen and Huang (2008)~\cite{shen2008sparse}, Leng and Wang (2009)~\cite{leng2009general}, Witten, Tibshirani and Hastie (2009)~\cite{witten2009penalized},
Johnstone and Lu (2009)~\cite{johnstone2009consistency},
Amini and Wainwright (2009)~\cite{amini2009high}, and Ma (2010)~\cite{ma2010}.

This paper studies the HDLSS asymptotic properties of sparse PCA. HDLSS asymptotics are based on the limit, as the dimension $d\rightarrow\infty$, with the sample size $n$ fixed, as originally studied by Hall, Marron and Neeman (2005)~\cite{hall2005geometric} and Ahn et al. (2007)~\cite{ahn2007high}.
Theoretical properties of sparse PCA have been studied before under different asymptotic frameworks. Leng and Wang (2009)~\cite{leng2009general} used the {\it adaptive lasso} penalty of Zou, Hastie and Tibshirani (2006)~\citep{zou2006adaptive} to introduce sparse loadings, and established some consistency result for selecting non-zero loadings when the sample size $n\rightarrow \infty$, with the dimension $d$ fixed. Johnstone and Lu (2009)~\cite{johnstone2009consistency} considered a single-component spiked covariance model (originally proposed by Johnstone (2001)~\cite{johnstone2001distribution}) and showed that conventional PCA is consistent if and only if $d(n)/n\rightarrow 0$; furthermore, under the condition $\log(d\vee n)/n\rightarrow 0$, they proved consistency of PCA performed on a subset of variables with largest sample variance.  Amini and Wainwright (2009)~\cite{amini2009high} considered the same single-component spiked model, and further restricted the maximal eigenvector to have $k$ non-zero entries; they studied the thresholding subset PCA procedure of Johnstone and Lu~\cite{johnstone2009consistency} and the sparsePCA procedure of d'Aspremont et al. (2007)~\cite{d2007direct}, and explored conditions on the triplet $(n, d, k)$ under which each procedure can recover the support set of the sparse eigenvector with probability one. Paul and Johnstone~\cite{paul2007augmented} developed the augmented sparse PCA procedure along with its optimal rate of convergence property. Ma~\cite{ma2010} proposed an iterative thresholding procedure for estimating principal subspaces that has nice theoretical properties.

Sparse PCA is primarily motivated by modern data sets of very high dimension; hence we prefer the statistical viewpoint of the High Dimension Low Sample Size (HDLSS) asymptotics. Note that this case of $d\rightarrow \infty$ with $n$ fixed was not considered by Johnstone and Lu~\cite{johnstone2009consistency}. Conventional PCA was first studied using HDLSS asymptotics by Ahn et al.~\cite{ahn2007high} and the most comprehensive current
result is Jung and Marron (2009)~\cite{jung2009pca}. The latter found conditions when the first several empirical PC directions
 would be \emph{consistent} or \emph{subspace consistent}
with the corresponding population PC directions. This happens when the first several eigenvalues are large enough, compared with
the rest of the
eigenvalues of the population covariance matrix. Moreover, if the first few
eigenvalues are not sufficiently large, all empirical
PC directions will be \emph{strongly inconsistent} with their population counterparts in the sense
that the angle between them will converge to 90 degrees.

The main contribution of this paper is an exploration of conditions where  conventional PCA is strongly inconsistent
(for scenarios with relatively small population eigenvalues), yet sparse PCA methods are consistent.
Furthermore, the mathematical boundaries of the sparse PCA consistency are established by showing strong
inconsistency, for even an oracle version of sparse PCA, beyond the consistent region. Similar to~Johnstone and Lu (2009)~\cite{johnstone2009consistency} and
Amini and Wainwright (2009)~\cite{amini2009high}, we focus on the single component spiked covariance model. Our results depend on a \emph{spike index}, $\alpha$, defined below in the context of Example~\ref{example:01}, which measures
the dominance of the first eigenvalue, and on a \emph{sparsity index}, $\beta$, defined also in Example~\ref{example:01},
which measures the number of non-zero entries of the first population eigenvector. For illustration purposes, we simplify the consistency and strong inconsistency results for the exemplary model considered in Example~\ref{example:01}, and summarize them below as functions of $\alpha$ and $\beta$ in Figure~\ref{fig:01}:

\begin{itemize}
\item \textbf{Previous Results (dark grey rectangle)}: Jung and Marron (2009) \cite{jung2009pca} showed that the first empirical eigenvector
 is consistent with
the first population eigenvector when the spike index $\alpha$ is greater than 1.
\item \textbf{Consistency (white triangle)}: We will show that sparse PCA is consistent
 even when the spike index $\alpha$ is less than 1, as long as $\alpha$ is greater than the sparsity index $\beta$.
 This is done in Section~\ref{sec:02} for a simple thresholding method and in Section~\ref{sec:03} for the RSPCA method proposed by Shen and Huang (2008)~\cite{shen2008sparse}.
\item \textbf{Strong Inconsistency (black triangle)}: In Section~\ref{sec:04} we show that even an oracle sparse PCA procedure is
strongly inconsistent with the first population eigenvector, when the spike index $\alpha$ is smaller than the sparsity index $\beta$.
\item \textbf{Irrelevant Area (light grey rectangle)}: The sparsity index $\beta$ can not be larger than 1, hence the light grey rectangular
area is irrelevant.
\end{itemize}

\begin{figure}[h]
\vspace{-.5cm}
 \begin{center}
 \includegraphics[width=\textwidth ]{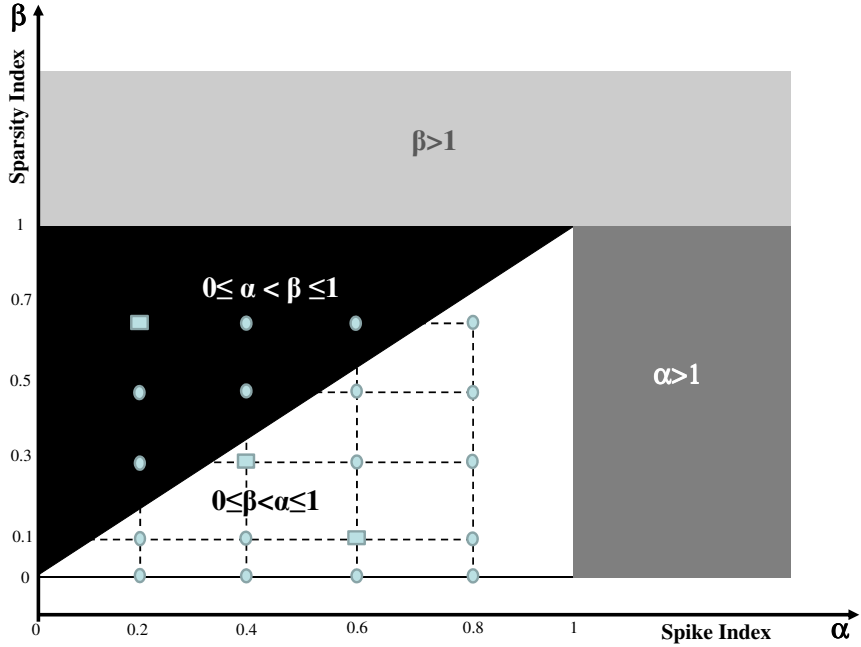}\vspace{-.3cm}
 \end{center}
 \vspace{-.4cm}
 \caption{Consistent areas for PCA and sparse PCA, as a function of the spike index $\alpha$ and
 the sparsity index $\beta$, under the single component spiked model considered in Example~\ref{example:01}. Conventional PCA is consistent only on
 the dark grey rectangle $(\alpha>1)$, while sparse PCA is also  consistent on the white triangle $(0\leq\beta<\alpha\leq1)$.
 In addition, an oracle sparse PCA procedure
 is strongly inconsistent on the black triangle $(0\leq\alpha<\beta\leq1)$. The light grey rectangular area $(0\leq \alpha<1, \beta>1)$
 is not considered because the sparsity index $\beta\leq1$. The dots show the grid points studied in the simulation study of Section~\ref{sec:04}.
 }\label{fig:01}
 \vspace{-.3cm}
 \end{figure}

\subsection{Notation and Assumptions}\label{subsec:11}

All quantities are indexed by the dimension $d$ in this paper. However, when it will not lead to confusion,
the subscript $d$ will be omitted for convenience. Let
the population covariance matrix be $\Sigma_d$.  The eigen-decomposition of $\Sigma_d$ is
\begin{equation*}
\Sigma_d=U_d\Lambda_dU^T_d,
\end{equation*}
where $\Lambda_d$ is the diagonal matrix of the population eigenvalues
$\lambda_1\geq\lambda_2\geq\ldots\geq\lambda_d$ and $U_d$
is the  matrix of corresponding population eigenvectors so that
$U_d=[u_1,\cdot\cdot\cdot,u_d]$.

Assume that $X_1,\ldots, X_n$ are random samples from a
$d$-dimensional normal distribution $N(0, \Sigma_d)$. Denote the
data matrix by $X_{(d)}=[X_1,\ldots,X_n]_{d\times n}$ and the sample
covariance matrix by $\hat{\Sigma}_d=n^{-1}X_{(d)}X_{(d)}^{T}$.
Then, the sample covariance matrix $\hat{\Sigma}_d$ can be similarly
decomposed as
\begin{equation*}
\hat{\Sigma}_d=\hat{U}_d\hat{\Lambda}_d\hat{U}^T_d,
\end{equation*}
where $\hat{\Lambda}_d$ is the diagonal matrix of the sample eigenvalues
$\hat{\lambda}_1\geq\hat{\lambda}_2\geq\ldots\geq\hat{\lambda}_d$
and $\hat{U}_d$ is the matrix of the corresponding sample eigenvectors so that
$\hat{U}_d=[\hat{u}_1,\ldots,\hat{u}_d]$.

Let $\bar{u}_i$ be any sample based estimator of $u_i$, e.g. $\bar{u}_i=\hat{u}_i$ for $i=1,\ldots ,d$. Two important concepts from
Jung and Marron (2009)~\cite{jung2009pca} are:
\begin{itemize}
\item \textbf{Consistency}: The direction $\bar{u}_i$ is \emph{consistent} with
its population counterpart $u_i$ if
\begin{equation}\mbox{Angle}(\bar{u}_i,u_i)\equiv\mbox{arccos}(\mid<\bar{u}_i,u_i>\mid)\xrightarrow{p}
0, \mbox{as} \;d\rightarrow \infty\label{Consistency}
,\end{equation}
where $ <\cdot, \cdot> $ denotes the inner product
between two vectors.
\item \textbf{Strong Inconsistency}: The direction $\bar{u}_i$ is \emph{strongly inconsistent} with
its population counterpart $u_i$ if
\begin{equation*}\mbox{Angle}(\bar{u}_i,u_i)=\mbox{arccos}(\mid<\bar{u}_i,u_i>\mid)\xrightarrow{p}
\frac{\pi}{2}, \mbox{as} \;d\rightarrow \infty.
\label{eq:inconsistency}
\end{equation*}
\end{itemize}

In addition, we consider another important concept in the current paper:
\begin{itemize}
\item
\textbf{Consistency with convergence rate $d^\iota$}: The direction $\bar{u}_i$ is consistent with
its population counterpart $u_i$ with the convergence rate $d^\iota$ if
$\mid<\bar{u}_i,u_i>\mid=1+o_p(d^{-\iota})$, where  the notation  $G_d\equiv o_p(d^{-\iota})$ means that $d^{\iota}G_d\xrightarrow{p} 0$,
as $d\rightarrow \infty$.
\end{itemize}

\begin{example} Assume that $X_1,\ldots, X_n$ are random sample vectors
from a $d$-dimensional normal distribution $N(0, \Sigma_d)$, where the covariance matrix $\Sigma_d$
has the eigenvalues as
\begin{equation*}
\lambda_1=d^\alpha, \lambda_2=\ldots=\lambda_d=1, \alpha\geq0.\label{eigenvalues}
\end{equation*}
This is a special case of the single component spike covariance Gaussian model considered before by, for example,
Johnstone (2001)~\cite{johnstone2001distribution}, Paul (2007) \cite{paul2007asymptotics},
Johnstone and Lu (2009)~\cite{johnstone2009consistency},
Amini and Wainwright (2009)~\cite{amini2009high}.
Without loss of generality (WLOG), we further assume that the first eigenvector $u_1$ is proportional to
the following $d$-dimensional vector
\begin{equation*}
\dot{u}_1=( \overbrace{1,\ldots,1}^{\lfloor d^{\beta}\rfloor}, 0,\ldots,0)^T,
\label{first_eigenvector}
\end{equation*}
where $0\leq \beta \leq1$ and $\lfloor d^{\beta}\rfloor$ denotes the integer part of $d^\beta$. (In general the non-zero entries do not have to be the first $\lfloor d^{\beta}\rfloor$ elements, neither do they need to be equal.) If $\beta=0$, the first population eigenvector becomes $u_1=(1,0,\ldots,0)^T$.
\label{example:01}
\end{example}

For the above model, Jung and Marron (2009)~\cite{jung2009pca} showed that
the first empirical eigenvector (the PC direction) $\hat{u}_1$ is
consistent with $u_1$ when $\alpha>1$; however for $\alpha<1$, it is
strongly inconsistent. Again, the main point of the current paper is
an exploration of conditions under which sparse methods can lead to
consistency when the spike index $\alpha \leq 1$, (recall that the
first eigenvalue $\lambda_1=d^\alpha$), by exploiting
\emph{sparsity}. Sparsity is quantified by the sparsity index
$\beta$, where $\lfloor d^\beta \rfloor$ is the number of non-zero
elements of the first eigenvector $u_1$. Here we use the above
simple example for intuitive illustration purposes, to highlight the
key findings. More general single component spike models will be
considered in Sections~\ref{sec:02} to~\ref{sec:04}.

\subsection{Roadmap of the paper} \label{subsec:12}
The organization of the rest of paper is as follows. For easy access to the main ideas, Section~\ref{sec:02} first introduces a simple thresholding method to generate sparse PC directions. Section~\ref{subsec:21} shows the consistency of the sparse PC directions, obtained by this simple thresholding method. Section~\ref{sec:03} then generalizes these ideas to a current sparse PCA method. In particular, we consider the sparse PCA method developed by Shen and Huang (2008)~\cite{shen2008sparse}, and build its connection to the simple thresholding method. 
We then establish the consistency
of the sparse PCA method under the sparsity and small spike conditions where the conventional PCA is strongly inconsistent. Section~\ref{sec:04} considers scenarios when the spike index $\alpha$ is smaller than the sparsity index $\beta$, and proves the strong inconsistency of an appropriate oracle PCA procedure. Section~\ref{sec:05} reports some simulation results to illustrate both consistency and strong inconsistency of PCA and sparse PCA. Section~\ref{sec:06} concludes the paper with some discussion of future work on extending consistency of sparse PCA to more general distributions. We point out that it is challenging to move beyond Gaussianity to get HDLSS consistency of sparse PCA. Section~\ref{sec:07} contains the proofs of the theorems.

\section{Consistency of a simple thresholding method for sparse PCA  in HDLSS}\label{sec:02}
In Example~\ref{example:01}, the first eigenvector of the sample
covariance matrix $\hat{u}_1$ is strongly inconsistent with $u_1$
when $\alpha<1$, because it attempts to estimate too many
parameters. Sparse data analytic methods assume that many of these
parameters are zero, which can allow greatly improved estimation of
the first PC direction $u_1$. Here, this issue is explored in the
context of sparse PCA, where $u_1=(1, 0, \ldots,0)^T$ is an extreme
case. The sample covariance matrix based estimator, $\hat{u}_1$, can
be improved by exploiting the fact that $u_1$ has many zero
elements.

A natural approach is a simple thresholding method where entries
with small absolute values are replaced by zero. In HDLSS contexts,
it is challenging to apply thresholding directly to the entries of
$\hat{u}_1$, because the number of them grows rapidly as
$d\rightarrow\infty$, which naturally shrinks their magnitudes given
that $\hat{u}_1$ is a unit vector. Thresholding is more conveniently
formulated in terms of the \emph{dual covariance matrix} as used by
Jung, Sen and Marron (2010)~\cite{Jung2010}.

Denote the dual sample covariance matrix by $S_d= \frac{1}{n}
X_{(d)}^{T}X_{(d)}$ and the first dual eigenvector by $\tilde{v}_1$.
The sample eigenvector $\hat{u}_1$ is connected with the dual
eigenvector $\tilde{v}_1$ through the following transformation,
\begin{equation}
\tilde{u}_1=(\tilde{u}_{1,1},\ldots,\tilde{u}_{d,1})^T=X_{(d)}\tilde{v}_{1},
\label{eq:dualeigevector}
\end{equation}
and the sample estimate is then given by
$\hat{u}_1={\tilde{u}_1}/{\|\tilde{u}_1\|}$~\cite{Jung2010}.

Given a sequence of threshold values $\lambda$, define
the thresholded entries as
\begin{equation} \breve{u}_{i,1}=
\begin{cases} \tilde{u}_{i,1} & \text{if $|\tilde{u}_{i,1}|> \lambda $,}
\\
0 &\text{if $|\tilde{u}_{i,1}|\leq \lambda $,}
\end{cases}
\quad {\rm for}\quad i=1,\ldots,d.\label{simplethreshold}
\end{equation}
Denote $\breve{u}_1=(\breve{u}_{1,1},\ldots,\breve{u}_{1,1})^T$ and
normalize it to get the simple thresholding (ST) estimator
$\hat{u}^{\rm ST}_1={\breve{u}_1}/{\|\breve{u}_1\|}$.

For the model considered in Example~\ref{example:01}, given an
eigenvalue of strength $\alpha \in (0,1)$, (recall
$\lambda_1=d^\alpha$ and $\hat{u}_1$ is strongly inconsistent),
below we explore conditions on the threshold sequence
$\lambda$ under which the ST estimator $\hat{u}^{\rm ST}_1$ is in
fact consistent with $u_1$. First of all, the threshold $\lambda$
can not be too large; otherwise all the entries will be zeroed out.
It will be seen in Theorem~\ref{Th:01} that a sufficient condition
for this is $\lambda\leq d^{\frac{\gamma}{2}}$, where $\gamma\in(0,
\alpha)$. Secondly, the threshold $\lambda$ can not be too small, or
pure noise terms will be included. A parallel sufficient condition
is shown to be $\lambda\geq \log^\delta(d)$, where
$\delta\in(\frac{1}{2},\infty)$.

\subsection{Consistency of the simple thresholding method}\label{subsec:21}
Below we formally establish conditions on the eigenvalues of the population covariance matrix $\Sigma_d$ and the thresholding parameter $\lambda$, which give consistency of $\hat{u}_1^{\mbox{ST}}$ to $u_1$. All the technical proofs are provided in Section~\ref{sec:07} and the supplement materials.

We begin with considering the extreme sparsity case
$u_1=(1,0,\ldots,0)^T$. Suppose that $\lambda_1\sim d^\alpha$, in
the sense that $0<c_1\leq
\underline{\mbox{lim}}_{d\rightarrow\infty}\frac{\lambda_1}{d^\alpha}\leq
 \overline{\mbox{lim}}_{d\rightarrow\infty}\frac{\lambda_1}{d^\alpha} \leq c_2 $, where $c_1$ and
 $c_2$ are two constants.  Similarly, assume $\sum_{i=2}^d \lambda_i\sim d$. As in Jung and Marron~\cite{jung2009pca}, denote the measure of sphericity as
 \begin{eqnarray*}
 \varepsilon\equiv \frac{\mbox{tr}^2(\Sigma_d)}{d \mbox{tr}(\Sigma_d^2)}=\frac{(\sum_{i=1}^d \lambda_i)^2}{d\sum_{i=1}^d\lambda_i^2 },
 \end{eqnarray*}
 and assume the  $\varepsilon$-condition: $\varepsilon\gg \frac{1}{d}$, i.e
 \begin{eqnarray}
 (d\varepsilon)^{-1}= \frac{\sum_{i=1}^d \lambda_i^2}{(\sum_{i=1}^d\lambda_i)^2 }
 \rightarrow 0, \mbox{as}\; d\rightarrow\infty.\label{eqn:eps}
 \end{eqnarray}

Now we need to impose the following conditions on the eigenvalues:
\begin{itemize}
\item Assume that $\overline{\mbox{lim}}_{d\rightarrow\infty}\frac{\lambda_1}{\sum_{i=2}^d \lambda_i}= c$, where $c$ is a non-negative
        constant, and the $\varepsilon$-condition is satisfied. These conditions can  guarantee that the dual matrix
         $S_d$ has a limit. Hence the first dual eigenvector $\hat{v}_1$ will have a limit and it will then help build up the
          consistency of $\hat{u}^{\rm ST}_1$.
\item
        In addition, we need the second eigenvalue $\lambda_2$ to be an obvious
         distance away from the first eigenvalue $\lambda_1$. If not, it will be hard to
         distinguish the first and second empirical eigenvectors as observed by Jung and Marron, among others.
        In that case the appropriate amount of thresholding on the first empirical eigenvector becomes unclear. Therefore, we assume
          that $\lambda_2\sim d^\theta$, where $\theta<\alpha$.
\end{itemize}

\begin{theorem} Suppose that $X_1,\ldots, X_n$ are
random samples from a $d$-dimensional normal distribution $N(0,
\Sigma_d)$ and the first population eigenvector $u_1=(1,0,\ldots,0)^T$.
If the following conditions are satisfied:
\begin{enumerate}
\item[(a)]$\lambda_1\sim d^\alpha$, $\lambda_2\sim d^\theta$,  and $\sum_{i=2}^d\lambda_i\sim d$,
where $\theta\in[0,\alpha)$ and $\alpha\in(0,1]$,
\item[(b)]for a non-negative constant $c$, $\overline{\mbox{lim}}_{d\rightarrow\infty}\frac{\lambda_1}{\sum_{i=2}^d \lambda_i}= c$
and the $\varepsilon$-condition~\eqref{eqn:eps} is satisfied,
\item[(c)]$\log^\delta(d)\; d^{\frac{\theta}{2}}\leq \lambda
 \leq d^{\frac{\gamma}{2}}$, where $\delta\in(\frac{1}{2},\infty)$ and
$\gamma\in(\theta, \alpha)$,
\end{enumerate}
then the simple thresholding estimator $\hat{u}^{\rm ST}_1$  is consistent with $u_1$.\label{Th:01}
\end{theorem}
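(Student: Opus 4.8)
The plan is to prove the (stronger) statement that, with probability tending to $1$, the thresholding rule~\eqref{simplethreshold} zeroes out every coordinate except the first, so that $\hat u^{\rm ST}_1 = \pm u_1$ exactly and $\mbox{Angle}(\hat u^{\rm ST}_1,u_1)=0$ on that event. Write $X_{(d)} = \Sigma_d^{1/2} Z$ with $Z$ a $d\times n$ matrix of i.i.d.\ $N(0,1)$ entries, and put $w_k = (z_{1k},\dots,z_{nk})^T$, so that $w_1,\dots,w_d$ are i.i.d.\ $N(0,I_n)$ and $S_d = \tfrac1n\sum_{k=1}^d \lambda_k w_k w_k^T = \tfrac{\lambda_1}{n} w_1 w_1^T + N$, where $N := \tfrac1n\sum_{k\ge 2}\lambda_k w_k w_k^T$ is a function of $w_2,\dots,w_d$ only. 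Since $u_1=e_1$, the $i$-th row of $X_{(d)}$ equals $\sqrt{\lambda_1}\,w_1$ when $i=1$ and $r_i := \sum_{k\ge 2}\sqrt{\lambda_k}(u_k)_i w_k$ when $i\ge 2$; hence by~\eqref{eq:dualeigevector} the lifted vector has coordinates $\tilde u_{1,1} = \sqrt{\lambda_1}\,\langle w_1,\tilde v_1\rangle$ and $\tilde u_{i,1} = \langle r_i,\tilde v_1\rangle$ for $i\ge 2$. It therefore suffices to establish (i) $\mathbb{P}(|\tilde u_{1,1}| > \lambda) \to 1$ and (ii) $\mathbb{P}(\max_{2\le i\le d}|\tilde u_{i,1}| \le \lambda) \to 1$.

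For (i) the key observation is that only a polynomially weak lower bound on $\langle w_1,\tilde v_1\rangle^2$ is needed, and it comes for free. Let $q$ be a unit top eigenvector of $N$ (a.s.\ well defined up to sign) and let $\tilde\lambda_1,\mu_1(N)$ be the largest eigenvalues of $S_d,N$. Comparing Rayleigh quotients,
\[
\tfrac{\lambda_1}{n}\langle w_1,q\rangle^2 + \mu_1(N) \;\le\; q^T S_d q \;\le\; \tilde\lambda_1 \;=\; \tilde v_1^T S_d \tilde v_1 \;\le\; \tfrac{\lambda_1}{n}\langle w_1,\tilde v_1\rangle^2 + \mu_1(N),
\]
so $\langle w_1,\tilde v_1\rangle^2 \ge \langle w_1,q\rangle^2$. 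Because $q$ depends only on $w_2,\dots,w_d$ it is independent of $w_1$, and since $\|q\|=1$ the variable $Z_0 := \langle w_1,q\rangle$ is $N(0,1)$. Hence $\tilde u_{1,1}^2 \ge \lambda_1 Z_0^2$, and using $\lambda_1\ge c_1 d^\alpha$ eventually from (a) together with $\lambda^2 \le d^\gamma$, $\gamma<\alpha$, from (c),
\[
\mathbb{P}\!\left(|\tilde u_{1,1}|\le\lambda\right) \;\le\; \mathbb{P}\!\left(Z_0^2 \le d^\gamma/\lambda_1\right) \;\le\; \mathbb{P}\!\left(Z_0^2 \le c_1^{-1} d^{\gamma-\alpha}\right)\;\to\;0 ,
\]
since $d^{\gamma-\alpha}\to0$.

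For (ii) the crude bound $|\tilde u_{i,1}| = |\langle r_i,\tilde v_1\rangle|\le\|r_i\|$ already suffices. For fixed $i\ge 2$ the entries $(X_{(d)})_{i1},\dots,(X_{(d)})_{in}$ are i.i.d.\ $N(0,\sigma_i^2)$ with $\sigma_i^2 = (\Sigma_d)_{ii} = \sum_{k\ge2}\lambda_k(u_k)_i^2 \le \lambda_2$ (using $\sum_{k=1}^d(u_k)_i^2=1$ and $(u_1)_i=0$), so $\|r_i\|^2$ has the law of $\sigma_i^2\chi_n^2$. A union bound together with $\sigma_i^2\le\lambda_2\le c_2 d^\theta$ and $\lambda^2\ge\log^{2\delta}(d)\,d^\theta$ from (c) gives
\[
\mathbb{P}\!\left(\max_{2\le i\le d}|\tilde u_{i,1}|>\lambda\right)\;\le\;(d-1)\,\mathbb{P}\!\left(\chi_n^2 > \lambda^2/\lambda_2\right)\;\le\;(d-1)\,\mathbb{P}\!\left(\chi_n^2 > c_2^{-1}\log^{2\delta}(d)\right).
\]
Standard chi-square tail bounds give $\mathbb{P}(\chi_n^2 > s)\le e^{-s/4}$ for all $s$ large (depending only on $n$), so the right side is at most $\exp\{\log(d-1) - \log^{2\delta}(d)/(4c_2)\}$, which tends to $0$ exactly because $\delta>\tfrac12$ makes $\log^{2\delta}(d)$ dominate $\log d$. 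On the intersection of the events in (i) and (ii) we have $\breve u_1 = \tilde u_{1,1}\,e_1$, hence $\hat u^{\rm ST}_1 = \mathrm{sign}(\tilde u_{1,1})\,u_1$; since this intersection has probability tending to $1$, $\mbox{Angle}(\hat u^{\rm ST}_1,u_1)\xrightarrow{p}0$, which is the theorem.

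The main obstacle is the lower bound (i). When $\alpha<1$ the noise part of $S_d$ dominates, $\tfrac{n}{d}S_d\to I_n$ is a degenerate limit, $\tilde v_1$ does not converge, and a direct attempt to pin $\langle w_1,\tilde v_1\rangle^2$ down to constant order fails: the signal-induced spectral gap is only of order $d^\alpha$ whereas the noise fluctuations in $S_d$ are larger, so Davis--Kahan-type perturbation bounds are useless. The resolution is the observation above that the threshold upper bound leaves polynomial room, combined with the fact that the pure-noise top eigenvector $q$, being independent of $w_1$, supplies a $\chi_1^2$ lower bound automatically. Condition (b) (the $\varepsilon$-condition and $\overline{\mbox{lim}}\,\lambda_1/\sum_{i\ge2}\lambda_i=c$) is what one would invoke to obtain the rescaled limit of $S_d$ and the sharper statements about $\hat u_1$ and $\tilde\lambda_1$; for the extreme-sparsity case of Theorem~\ref{Th:01} the argument is carried by conditions (a) and (c).
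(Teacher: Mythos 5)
Your proposal is correct, but it proves Theorem~\ref{Th:01} by a genuinely different route than the paper. The paper never argues the $\beta=0$ case separately: it treats Theorem~\ref{Th:01} as the special case of Theorem~\ref{Th:02}, whose proof controls the ratio in the analogue of \eqref{innerproduct} by decomposing numerator and denominator, invoking the limiting behavior of the dual eigenvector $\tilde{v}_1$ (this is where condition (b), the $\varepsilon$-condition and the Jung--Marron / Jung--Sen--Marron results enter, so that $|\tilde{v}_1^T\tilde{W}_1|$ has a nondegenerate limit), and using the Leadbetter--Lindgren--Rootzen extreme-value machinery to handle the signal block, which is vacuous when $\beta=0$. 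You instead prove the stronger statement of exact support recovery: the Rayleigh-quotient comparison $\langle w_1,\tilde v_1\rangle^2\geq\langle w_1,q\rangle^2$, with $q$ the top eigenvector of the pure-noise part of the dual matrix and hence independent of $w_1$, converts the needed lower bound on $|\tilde u_{1,1}|$ into a $N(0,1)^2$ variable exceeding $O(d^{\gamma-\alpha})$, which is exactly the polynomial slack left by $\gamma<\alpha$; and the deterministic bound $|\tilde u_{i,1}|\leq\|r_i\|$ plus a union bound with $\chi^2_n$ tails uses $\delta>\tfrac12$ (so $\log^{2\delta}d$ beats $\log d$) to kill all noise coordinates. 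Both steps are sound (the sign/measurability of $q$ and the dependence of $\tilde v_1$ on $r_i$ are correctly sidestepped), and your argument never uses condition (b) — which is harmless, since unused hypotheses do not invalidate the theorem. What each approach buys: yours is more elementary, avoids any limit theory for $\tilde v_1$, and yields the sharper conclusion that $\hat u^{\rm ST}_1=\pm u_1$ with probability tending to one; the paper's heavier argument is what extends to $\beta>0$ in Theorem~\ref{Th:02}, where small nonzero loadings need not survive thresholding, exact recovery is no longer the right target, and explicit convergence rates (Theorem~\ref{corr:01}) are extracted — your "zero out everything but the first coordinate" strategy does not carry over to that general case.
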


In fact, $u_1=(1,0,\ldots,0)^T$ in Theorem~\ref{Th:01} is a very extreme case. The following theorem considers
the general case $u_1=(u_{1,1},\ldots,u_{d,1})^T$, where
only $\lfloor d^\beta \rfloor$ elements of $u_1$ are non-zero.
WLOG, we assume that the first $ \lfloor d^\beta \rfloor$ entries are non-zero just for notational convenience.

Define \begin{equation}
Z_j\equiv(z_{1,j},\ldots,z_{d,j})^T=(X^T_j u_1,\ldots,X^T_j u_d)^T,\quad j=1,\ldots,n.\label{eq:z}
\end{equation}
We can show that $Z_j$ are iid  $N\left(0, {\rm diag}\{\lambda_1,\ldots,\lambda_d\}\right)$ random vectors. In addition, let
\begin{equation}
W_j\equiv(w_{1,j},\ldots, w_{d,j})^T=(\lambda^{-\frac{1}{2}}_1z_{1,j},
\ldots,\lambda^{-\frac{1}{2}}_dz_{d,j})^T,\quad j=1,\ldots,n, \label{eq:w}
\end{equation}
and the $W_j$ are iid $N(0, I_d)$ random vectors, where $I_d$ is the $d$-dimensional identity matrix.

The following additional conditions are needed to ensure the
consistency of $\hat{u}^{\rm ST}_1$: 
\begin{itemize}
\item The non-zero entries of the population eigenvector $u_1$ need to be a certain distance away from zero. In fact, if  the non-zero entries of the first population eigenvector are close to zero,
the corresponding entries of the first empirical eigenvector would also be
 small and look like pure noise entries. Thus, we assume
\begin{eqnarray*}
\mbox{max}_{1\leq i \leq \lfloor d^\beta \rfloor}
|u_{i,1}|^{-1}\sim d^{\frac{\eta}{2}},\quad {\rm where}\quad
\eta\in[0,\alpha).
\end{eqnarray*}

\item  From (\ref{eq:z}), we have
\begin{eqnarray*}
X_j=\sum_{i=1}^d z_{i,j}u_i,\quad j=1,\ldots,n.
\end{eqnarray*}
Since $z_{1,j}$ has the largest variance $\lambda_1$, then $z_{1,j}u_1$ contributes the most to
the variance of $X_j$, $j=1,\ldots,n$. Note that $z_{1,j}u_1$ is consistent with $u_1$,
and so $z_{1,j}u_1$ is the key to making the simple thresholding method work. So we
need to show that the remaining parts
\begin{eqnarray}
H_j\equiv(h_{1,j},\ldots,h_{d,j})^T=\sum_{i=2}^d z_{i,j}u_i, \quad j=1,\ldots,n\label{eq:h}
\end{eqnarray}
have a negligible effect on the direction vector $\hat{u}^{\rm ST}_1$.

\item Suppose that
the $H_j$ are iid $N(0,\Delta_d)$, where $\Delta_d=(m_{k l})_{d\times d}$, for $j=1,\ldots,n$.
A sufficient condition to make their effect negligible is the following mixing condition
of Leadbetter, Lindgren and Rootzen (1983)~\cite{leadbetter1983extremes}:
\begin{eqnarray}|m_{kl}|\leq
{m_{kk}}^{\frac{1}{2}}{m_{ll}}^{\frac{1}{2}} \rho_{|k-l|}, \quad
1\leq k\neq l \leq \lfloor d^\beta \rfloor,\label{mixing}
\end{eqnarray}
where $\rho_t < 1$ for all $t>1$ and $\rho_t \log(
t)\longrightarrow0$,  as $ t \rightarrow\infty$. This mixing
condition can guarantee that $\mbox{max}_{1\leq j\leq n}|h_{1,j}|$
has a quick convergence rate, as $d\rightarrow \infty$. It enables
us to neglect the influence of $H_j$ for sufficiently large $d$ and
make $z_{i,j}u_1$ the dominant component, which then gives
consistency to the first population eigenvector $u_1$. Thus the
thresholding estimator $\hat{u}_1^{\rm ST}$ becomes consistent.
\end{itemize}

We now state one of the main theorems:
\begin{theorem}  Assume that
$X_1,\ldots,X_n$ are random samples from a
$d$-dimensional normal distribution $ N(0, \Sigma_d) $. Define $Z_j$, $W_j$ and $H_j$ as in (\ref{eq:z}), (\ref{eq:w}), and (\ref{eq:h}) for $j=1,\ldots,n$.
The first population eigenvector is $u_1=(u_{1,1},\ldots,u_{d,1})^T$ with $u_{i,1}\neq 0, i=1,\ldots, \lfloor d^\beta \rfloor$, and otherwise
$u_{i,1}=0$.

If the following conditions are satisfied:
\begin{enumerate}
\item[(a)] $\lambda_1\sim d^\alpha$, $\lambda_2\sim d^\theta$,  and $\sum_{i=2}^d\lambda_i\sim d$,
where $\theta\in[0,\alpha)$ and $\alpha\in(0,1]$,

\item[(b)] for a non-negative constant $C$, $\overline{\mbox{lim}}_{d\rightarrow\infty}\frac{\lambda_1}{\sum_{i=2}^d \lambda_i}=C$
and  $\varepsilon$-condition \eqref{eqn:eps} is satisfied,

\item[(c)] $\mbox{max}_{1\leq i \leq [d^\beta]}
|u_{i,1}|^{-1}\sim d^{\frac{\eta}{2}}$, where $\eta\in[0,\alpha)$,

\item[(d)] $H_j$ satisfies the mixing condition (\ref{mixing}), $j=1,\ldots, n$ ,

\item[(e)] $\log^\delta(d)\; d^{\frac{\theta}{2}}\leq \lambda \leq
d^{\frac{\gamma}{2}}$, where $\delta\in(\frac{1}{2}, \infty)$ and
$\gamma\in(\theta, \alpha-\eta)$,
\end{enumerate}
then the thresholding estimator $\hat{u}^{\rm ST}_1$ is
consistent with $u_1$.\label{Th:02}
\end{theorem}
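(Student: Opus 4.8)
The plan is to mimic the structure of the proof of Theorem~\ref{Th:01}: split the data into a rank-one ``signal'' that is exactly proportional to $u_1$ plus a ``noise'' part orthogonal to $u_1$, show that the thresholding step recovers the support $S=\{1,\dots,\lfloor d^\beta\rfloor\}$ of $u_1$ exactly with probability tending to one, and then estimate the resulting angle. Write $z_1=(z_{1,1},\dots,z_{1,n})^T$ and $H_{(d)}=[H_1,\dots,H_n]$, so that $X_{(d)}=u_1 z_1^T+H_{(d)}$. Since $u_i\perp u_1$ for $i\ge2$ we have $H_{(d)}^Tu_1=0$, hence $X_{(d)}^TX_{(d)}=z_1z_1^T+H_{(d)}^TH_{(d)}$; writing $A\equiv\langle z_1,\tilde v_1\rangle$ and $q\equiv H_{(d)}\tilde v_1$, whose $i$-th coordinate equals $G_i\equiv\sum_{j=1}^n \tilde v_{1,j} h_{i,j}$, the dual transformation~(\ref{eq:dualeigevector}) gives $\tilde u_1=Au_1+q$ with $\langle q,u_1\rangle=0$. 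In particular $\tilde u_{i,1}=u_{i,1}A+G_i$ for every $i$, and $\tilde u_{i,1}=G_i$ for $i\notin S$.

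The argument then splits into three estimates. \emph{The noise is uniformly small.} By Cauchy--Schwarz $|G_i|\le\|h_i\|$ with $h_i=(h_{i,1},\dots,h_{i,n})^T$, and $\|h_i\|^2$ is $m_{ii}$ times a $\chi^2_n$ variable, where $m_{ii}\le\lambda_2\sim d^\theta$; a union bound over $i=1,\dots,d$ via the $\chi^2_n$ tail, together with the mixing condition~(\ref{mixing}) on $S$ to fix the exact scale of $\max_{i\in S}|G_i|$, gives $\max_{1\le i\le d}|G_i|\le\log^\delta(d)\,d^{\theta/2}$ with probability tending to one, which by~(e) is below $\lambda$, so every off-support coordinate is thresholded away. \emph{$|A|$ is large.} Conditioning on $\{w_i:i\ge2\}$ makes the leading eigenvector $f_1$ of $H_{(d)}^TH_{(d)}$ a fixed unit vector independent of $w_1$; analyzing the rank-one update $z_1z_1^T+H_{(d)}^TH_{(d)}$ through its secular equation yields a bound of the form $A^2\ge \lambda_1\langle w_1,f_1\rangle^4/\|w_1\|^2$, and since $\langle w_1,f_1\rangle\sim N(0,1)$ and $\|w_1\|^2\sim\chi^2_n$ have fixed laws, $|A|$ is of order $\sqrt{\lambda_1}\,V_d$ with $V_d>0$ tight. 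Because $\lambda_1\sim d^\alpha$ while, by~(c), each on-support coordinate only needs $|u_{i,1}A|$ of order $d^{-\eta/2}|A|$ to exceed $\lambda+\max_i|G_i|=O(d^{\gamma/2})$ with $\gamma<\alpha-\eta$, the polynomial gap forces $|\tilde u_{i,1}|>\lambda$ for all $i\in S$ with probability tending to one; hence $S$ is recovered exactly. \emph{The angle vanishes.} On that event $\breve u_1=Au_1+\tilde q$, where $\tilde q$ is $q$ restricted to $S$ (padded with zeros), so $|\cos\mbox{Angle}(\hat u^{\rm ST}_1,u_1)|\ge(1-r)/(1+r)$ with $r=\|\tilde q\|/|A|$. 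Since $\|\tilde q\|^2\le\lfloor d^\beta\rfloor\,\max_{i\in S}G_i^2=O(d^{\beta+\theta}\log d)$ while $A^2$ is of order $d^\alpha V_d^2$, and the normalization $\|u_1\|=1$ combined with $\min_{i\in S}|u_{i,1}|\sim d^{-\eta/2}$ forces $\beta\le\eta$ and therefore $\beta+\theta\le\eta+\theta<\alpha$ by~(e), we obtain $r\xrightarrow{p}0$, which gives the claimed consistency (and in fact a polynomial rate of order roughly $d^{(\alpha-\beta-\theta)/2}$).

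The $\chi^2_n$ tail and union-bound estimate and the final angle computation are routine. The main obstacle is the lower bound on $|A|$: unlike in Theorem~\ref{Th:01}, when $\alpha<1$---and especially when $\lambda_1$ is dominated by the fluctuations of $H_{(d)}^TH_{(d)}$---the dual eigenvector $\tilde v_1$ need not be close to $z_1/\|z_1\|$, so no Davis--Kahan-type bound is available; one has to extract a non-vanishing alignment $\langle z_1,\tilde v_1\rangle$ from the exact rank-one update formula and the conditioning argument, and then check that the polynomial gaps $\theta<\gamma<\alpha-\eta$ and $\beta+\theta<\alpha$ leave enough room for the error terms. Verifying how precisely the mixing condition~(\ref{mixing}) is needed to control $\max_{i\in S}|G_i|$ at the threshold scale is the other delicate point.
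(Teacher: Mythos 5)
Your proposal is correct, but it takes a genuinely different route from the paper at the two decisive steps, even though it starts from the same decomposition $\tilde u_{i,1}=u_{i,1}\,\tilde v_1^T\tilde Z_1+\tilde v_1^T\tilde H_i$. The paper never claims exact support recovery: it bounds weighted indicator sums such as $\sum_{i\le\lfloor d^\beta\rfloor}u_{i,1}^2\,1_{\{|\tilde v_1^T\tilde X_i|\le\lambda\}}$ and shows that the numerator and denominator of $|\langle\hat u_1^{\rm ST},u_1\rangle|$ both equal $|\tilde v_1^T\tilde W_1|+o_p(\cdot)$, with the non-degeneracy of $|\tilde v_1^T\tilde W_1|$ coming from the HDLSS limit of the normalized dual covariance matrix (conditions (a)--(b), the $\varepsilon$-condition~\eqref{eqn:eps}, the results of Jung--Marron and Jung--Sen--Marron plus Acker's eigenvector continuity), while the noise maxima are handled by the Leadbetter extreme-value machinery under the mixing condition~\eqref{mixing}. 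You instead prove exact recovery of the support and lower-bound the alignment $A=\langle z_1,\tilde v_1\rangle$ deterministically through the rank-one update: writing $M=H_{(d)}^TH_{(d)}=\sum_k\mu_k f_kf_k^T$ and letting $\nu_1$ be the top eigenvalue of $M+z_1z_1^T$, the secular equation gives $A^2=\bigl(\sum_k\langle z_1,f_k\rangle^2(\nu_1-\mu_k)^{-2}\bigr)^{-1}\ge(\nu_1-\mu_1)^2/\|z_1\|^2\ge\langle z_1,f_1\rangle^4/\|z_1\|^2=\lambda_1\langle w_1,f_1\rangle^4/\|w_1\|^2$, which is exactly your claimed bound, and the conditioning/rotational-invariance step is legitimate because $z_1$ is independent of $H_{(d)}$; your other flagged point also checks out, since $\|u_1\|=1$ together with condition (c) forces $\beta\le\eta$, hence $\beta+\theta\le\eta+\theta<\alpha$ by (e). As for what each approach buys: yours is more self-contained (no appeal to the dual-matrix limit, hence no essential use of condition (b) or the $\varepsilon$-condition, and the mixing condition is effectively replaced by plain $\chi^2_n$ union bounds, which suffice at the scale $\log^\delta(d)\,d^{\theta/2}$ precisely because $2\delta>1$), and it yields the strictly stronger conclusion that the support is recovered exactly with probability tending to one; the paper's route identifies the common limiting quantity $|\tilde v_1^T\tilde W_1|$ in numerator and denominator, which is what powers the explicit rate statement of Theorem~\ref{corr:01} and is reused for the RSPCA results (Theorems~\ref{Th:03}--\ref{corr:03}), and it does not hinge on every on-support entry surviving the threshold. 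Your parenthetical rate of order $d^{(\alpha-\beta-\theta)/2}$ is plausible up to logarithmic factors but is not needed for the consistency claim.
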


We offer a couple of remarks regarding the above theorem. First of
all, the theorem naturally reduces to Theorem \ref{Th:01} if we let
the sparsity index $\beta=0$. More importantly, this theorem, and
the following ones in Sections~\ref{sec:02} to~\ref{sec:04}, show
that the concepts depicted in Figure~\ref{fig:01} hold much more
generally than just under the conditions of
Example~\ref{example:01}. In particular, in the above
Theorem~\ref{Th:02}, setting $\theta=0$ and $\eta=\beta$ would give
the results plotted in Figure~\ref{fig:01}.

In addition, for different thresholding parameter $\lambda$, the ST
estimator $\hat{u}_1^{\rm ST}$ is consistent with $u_1$ with
different convergence rate. This result is stated in the following
theorem. The notation $\lambda= o(d^\rho)$ below means that $\lambda
d^{-\rho}\rightarrow 0$ as $d\rightarrow \infty$.
\begin{theorem}
For the thresholding parameter $\lambda= o(d^{\frac{\alpha-\eta-\varsigma}{2}})$,
where $\varsigma \in [0, \alpha-\eta-\theta)$, the corresponding thresholding estimator $\hat{u}^{\rm ST}_1$ is consistent
with $u_1$, with a convergence rate of $d^{\frac{\varsigma}{2}}$.
\label{corr:01}
\end{theorem}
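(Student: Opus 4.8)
The plan is to go back through the proof of Theorem~\ref{Th:02} and record the order in $d$ of every error term, rather than only certifying that it is $o_p(1)$; when $\varsigma=0$ the statement reduces to Theorem~\ref{Th:02}. I keep conditions (a)--(d) of Theorem~\ref{Th:02} together with the lower bound $\lambda\geq\log^\delta(d)\,d^{\theta/2}$, and I use the new bound $\lambda=o(d^{(\alpha-\eta-\varsigma)/2})$, which in particular forces $\lambda=o(d^{(\alpha-\eta)/2})$. The starting point is the decomposition already used for Theorem~\ref{Th:02}: since $X_j=z_{1,j}u_1+H_j$ with $H_j$ orthogonal to $u_1$, one has $\tilde u_1=X_{(d)}\tilde v_1=a\,u_1+R$ with $a:=z_1^T\tilde v_1$ a scalar and $R:=H_{(d)}\tilde v_1$.

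First I would re-run the support-recovery step, now recording the margin to the threshold. On a high-probability event every signal coordinate satisfies $|\tilde u_{i,1}|\geq c_1 d^{(\alpha-\eta)/2}$ (its signal part $a u_{i,1}$ dominates $R_i$ because $\theta<\alpha-\eta$ and $\max_k|R_k|=O_p(d^{\theta/2}\sqrt{\log d})$), which exceeds $\lambda$ by a factor of order $d^{\varsigma/2}$, so it survives the thresholding; the mixing condition~\eqref{mixing} together with $\lambda\geq\log^\delta(d)\,d^{\theta/2}$ (a Leadbetter-type extreme-value bound, as in Theorem~\ref{Th:02}) makes every noise coordinate fall below $\lambda$. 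On the event that $\breve u_1$ is therefore supported exactly on $\{1,\ldots,\lfloor d^\beta\rfloor\}$ and equals $\tilde u_1$ there, writing $r:=R|_{\{1,\ldots,\lfloor d^\beta\rfloor\}}$ gives
\[
|\langle \hat u^{\rm ST}_1,u_1\rangle|^2=\frac{(a+\langle r,u_1\rangle)^2}{a^2+2a\langle r,u_1\rangle+\|r\|^2}=1-\frac{\|r\|^2-\langle r,u_1\rangle^2}{a^2+2a\langle r,u_1\rangle+\|r\|^2},
\]
so it is enough to show $\|r\|^2=o_p(d^{-\varsigma})\,a^2$.

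For the denominator, the convergence of the rescaled dual matrix $S_d$ under condition (b) and the $\varepsilon$-condition pins down $\tilde v_1$ well enough that $|a|=|z_1^T\tilde v_1|$ is comparable to $\|z_1\|$, and the $\chi^2$-concentration of $\|z_1\|^2=\sum_{j=1}^n z_{1,j}^2$ about $n\lambda_1\sim n d^\alpha$ shows $a^2$ is, with probability tending to one, at least a fixed multiple of $d^\alpha$. For the numerator, $\|r\|^2=\sum_{i\leq\lfloor d^\beta\rfloor}R_i^2$; conditionally on $z_1$, which is independent of the $H_j$, each $R_i=\sum_j\tilde v_{j,1}h_{i,j}$ is, up to the error of replacing $\tilde v_1$ by its limit, centered Gaussian with variance at most $m_{ii}\leq\lambda_2\sim d^\theta$, and the mixing condition~\eqref{mixing} again lets a union bound over the $\lfloor d^\beta\rfloor$ coordinates cost only a logarithmic factor, so $\max_{i\leq\lfloor d^\beta\rfloor}R_i^2=O_p(d^\theta\log d)$ and $\|r\|^2=O_p(d^{\beta+\theta}\log d)$. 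Hence $1-|\langle\hat u^{\rm ST}_1,u_1\rangle|^2=O_p(d^{\beta+\theta-\alpha}\log d)$; since any unit vector with $\lfloor d^\beta\rfloor$ nonzero entries has a coordinate of modulus at most $d^{-\beta/2}$, condition (c) forces $\eta\geq\beta$, and then $\varsigma<\alpha-\eta-\theta\leq\alpha-\beta-\theta$ makes this $o_p(d^{-\varsigma})$, a fortiori $o_p(d^{-\varsigma/2})$. Therefore $|\langle\hat u^{\rm ST}_1,u_1\rangle|=1+o_p(d^{-\varsigma/2})$, i.e. the convergence rate is $d^{\varsigma/2}$.

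The step I expect to be the main obstacle is the one that already makes Theorem~\ref{Th:02} delicate, now sharpened by the need for a rate: $\tilde v_1$ is data-dependent and correlated with $H_{(d)}$, so one cannot simply condition on it when bounding $R=H_{(d)}\tilde v_1$. Following Jung and Marron~\cite{jung2009pca} and Jung, Sen and Marron~\cite{Jung2010}, I would replace $\tilde v_1$ by the limiting leading direction of the rescaled $S_d$, show that the resulting replacement error in $R$ (and in $a$) is negligible at the scale $d^{-\varsigma/2}$, using conditions (a), (b) and the $\varepsilon$-condition, and only then apply the Gaussian-tail and mixing estimates. Carrying this replacement error at a prescribed polynomial rate, instead of merely showing that it is $o_p(1)$, is the technical crux, and it is also what ties the admissible threshold range $\lambda=o(d^{(\alpha-\eta-\varsigma)/2})$ to the resulting rate $d^{\varsigma/2}$.
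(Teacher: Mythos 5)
Your proposal is sound and reaches the stated conclusion, but it follows a genuinely different route from the paper. The paper proves Theorems~\ref{Th:02} and~\ref{corr:01} in one pass and never establishes exact support recovery: it sandwiches the numerator and denominator of $|\langle\hat u^{\rm ST}_1,u_1\rangle|$ between $|\tilde v_1^T\tilde W_1|$ plus explicit error terms --- within-support noise \eqref{noiseestimate:01}--\eqref{noiseestimate:02}, off-support coordinates that survive thresholding \eqref{noiseestimate:03}, and signal coordinates that are thresholded out \eqref{noiseestimate:04} --- carrying an exponent $\varsigma<\alpha-\eta-\theta$ for the first group and an exponent $\varsigma'$ tied to the threshold through $d^{(\varsigma'+\eta-\alpha)/2}\lambda=o(1)$ for the last; the hypothesis $\lambda=o(d^{(\alpha-\eta-\varsigma)/2})$ then lets one take $\varsigma'=\varsigma$, giving the rate $d^{\varsigma/2}$. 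You instead prove exact support recovery with high probability and then use the identity for $1-\cos^2$, with the $\lambda$-upper bound entering only through support recovery and the rate coming from $\|r\|^2/a^2=O_p(d^{\beta+\theta-\alpha}\log d)$ together with the observation $\eta\ge\beta$; this in fact gives $1+o_p(d^{-\varsigma})$, stronger than required, so the theorem follows. Two cautions. First, the step you flag as the crux --- replacing $\tilde v_1$ by its limiting direction with a replacement error controlled at a polynomial rate --- is both unnecessary and not available from the cited results, since $\tilde v_1$ is only known to converge in distribution (as in \eqref{v:convergence}), with no rate. The paper dissolves the dependence of $\tilde v_1$ on $H_{(d)}$ with the worst-case bound $|\tilde v_1^T\tilde H_i|\le\sum_{j=1}^n|h_{i,j}|$, valid for any unit $n$-vector because $n$ is fixed; the same bound makes your estimates $\max_i|R_i|=O_p(d^{\theta/2}\sqrt{\log d})$ and $\|r\|^2=O_p(d^{\beta+\theta}\log d)$ rigorous without conditioning on or replacing $\tilde v_1$, and the limit of $\tilde v_1$ is needed only for the lower bound on $|a|=\lambda_1^{1/2}|\tilde v_1^T\tilde W_1|$, where no rate is required (one only needs the limit law to have no atom at zero, so your ``fixed multiple of $d^\alpha$ with probability tending to one'' should be stated in the $O_p$ sense). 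Second, the mixing condition \eqref{mixing} is imposed only on the block $1\le k\neq l\le\lfloor d^\beta\rfloor$, so it cannot be invoked for the off-support coordinates; there a plain Gaussian tail plus union bound, using $\lambda\ge\log^\delta(d)\,d^{\theta/2}$ with $\delta>\tfrac12$, is what kills every noise coordinate, which is all your argument needs.
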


\section{Asymptotic properties of RSPCA}\label{sec:03}

As noted in Section~\ref{sec:01}, several sparse PCA methods have
been proposed in the literature. Here we perform a detailed HDLSS
asymptotic analysis of the sparse PCA procedure developed by Shen
and Huang (2008)~\cite{shen2008sparse}. For simplicity, we refer to it as the
{\it regularized sparse PCA}, or RSPCA for short. All the detailed technical proofs are
again provided in Section~\ref{sec:07} and the supplement materials.

We start with briefly reviewing the methodological details of RSPCA.
(For more details, see ~\cite{shen2008sparse}.) Given
a $d$-by-$n$ data matrix $X_{(d)}$, consider the following penalized
sum-of-squares criterion:
\begin{eqnarray}
\| X_{(d)}-uv^T\|^2_F+P_{\lambda}(u),\quad {\rm subject\; to}\quad
\| v \|=1, \label{For:01}
\end{eqnarray}
where $u$ is a $d$-vector, $v$ is a unit $n$-vector, $\| \cdot \|_F$
denotes the Frobenius norm, and $P_{\lambda}(u)=\sum_{i=1}^d
p_{\lambda}\left(|{u}_{i,1}|\right)$ is a penalty function with
$\lambda\geq 0$ being the penalty parameter. The penalty function
can be any sparsity-inducing penalty. In
particular, Shen and Huang~\cite{shen2008sparse} considered the soft thresholding
(or $L_1$ or LASSO) penalty of Tibshirani (1996)~\cite{tibshirani1996regression}, the
hard thresholding penalty of Donoho and Johnstone (1994)~\cite{donoho1994ideal}, and the smoothly
clipped absolute deviation (SCAD) penalty of Fan and Li (2001)~\cite{fan2001variable}.

Without the penalty term or when $\lambda=0$, minimization
of~\eqref{For:01} can be obtained via singular value decomposition
(SVD)~\cite{eckart1936approximation}, which results in the best
rank-one approximation of $X_{(d)}$ as $\tilde{u}_1\tilde{v}^T_1$,
where $\tilde{u}_1$ and $\tilde{v}^T_1$ minimize the
criterion~\eqref{For:01}. The normalized $\tilde{u}_1$ turns out to
be the first empirical PC loading vector. With the penalty
term, Shen and Huang define the sparse PC loading vector as
$\hat{u}_1= {\tilde{u}_1}/{\| \tilde{u}_1 \|}$ where $\tilde{u}_1$
is now the minimizer of~\eqref{For:01} with the penalty term
included. The minimization now needs to be performed iteratively.
For a given $\tilde{v}_1$ in the criterion~(\ref{For:01}), we can
get the minimizing vector as
$\tilde{u}_1=h_{\lambda}\left(X_{(d)}\tilde{v}_1\right)$, where
$h_{\lambda}$ is a thresholding function that depends on the
particular penalty function used and the penalty (or thresholding)
parameter $\lambda$. See~\cite{shen2008sparse} for more details. The thresholding is
applied to the vector $X_{(d)}\tilde{v}_1$ componentwise.

Shen and Huang (2008)~\cite{shen2008sparse} proposed the following iterative procedure for minimizing the criterion~(\ref{For:01}):

\centerline{\fbox{\bf The RSPCA Algorithm}}\vskip -0.05in
\rule{5in}{.1mm}
\begin{enumerate}
\item Initialize:
    \begin{enumerate}
    \item Use SVD to obtain the best rank-one approximation
$\tilde{u}_1 \tilde{v}_1^T$ of the data matrix $X_{(d)}$,
 where $\tilde{v}_1$ is a unit vector.
 \item Set $\tilde{u}^{\rm old}_1=\tilde{u}_1$
 and $\tilde{v}^{\rm old}_1=\tilde{v}_1$.
    \end{enumerate}

\item Update:
    \begin{enumerate}
       \item $\tilde{u}^{\rm new}_1=h_{\lambda}\left(X_{(d)}\tilde{v}^{\rm old}_1\right)$.

          \item  $\tilde{v}^{\rm new}_1=\frac{X^T_{(d)}\tilde{u}^{\rm new}_1}{\| X^T_{(d)}\tilde{u}^{\rm new}_1\|}$.
      \end{enumerate}

   \item   Repeat Step 2 setting $\tilde{u}^{\rm old}_1=\tilde{u}^{\rm new}_1$ and $\tilde{v}^{\rm old}_1=\tilde{v}^{\rm new}_1$ until convergence.
    \item Normalize the final $\tilde{u}^{\rm new}_1$ to get $\hat{u}_1$, the desired sparse loading vector.
\end{enumerate}
\vspace*{-2mm} \rule{5in}{.1mm}


There exists a nice connection between the simple thresholding (ST)
method of Section~\ref{sec:02} and RSPCA. The ST estimator
$\hat{u}_1^{\rm ST}$ is exactly the sparse loading vector
$\hat{u}_1$ obtained from the first iteration of the RSPCA iterative
algorithm, when the hard thresholding penalty is used. In
particular, the first dual eigenvector $\tilde{v}_1$
in~(\ref{eq:dualeigevector}) is just the $\tilde{v}_1$ from the best
rank-one approximation $\tilde{u}_1 \tilde{v}_1^T$ of the data
matrix $X_{(d)}$. Then, the application of the simple thresholding
method to the vector $X_{(d)}\tilde{v}_1$ as in
(\ref{simplethreshold}) leads to the sparse ST estimator
$\hat{u}^{\rm ST}_1$. This is the same as applying the hard
thresholding penalty in~(\ref{For:01}) to generate the sparse
loading vector $\hat{u}_1$, for the given $\tilde{v}_1$. The
thresholding parameter $\lambda$ in (\ref{simplethreshold}) also
corresponds to the penalty parameter $\lambda$ in~(\ref{For:01}) in
the case of the hard thresholding penalty.

Below we develop conditions under which the sparse RSPCA loading vector $\hat{u}_1$ is consistent with
the population eigenvector $u_1$ when a proper thresholding parameter
$\lambda$ is used. All three of the soft thresholding, hard thresholding or SCAD penalties are considered. First, the following theorem states conditions when the first step sparse loading vector $\hat{u}_1$ is consistent with $u_1$ under the proper thresholding parameter $\lambda$.

\begin{theorem}  Under the assumptions and conditions  of Theorem~\ref{Th:02}, the first step
 sparse loading vector $\hat{u}_1$  is consistent with $u_1$.\label{Th:03}
\end{theorem}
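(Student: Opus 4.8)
The plan is to exploit the structural connection already established between the simple thresholding (ST) estimator and the first iteration of the RSPCA algorithm. Recall that the ST estimator $\hat{u}_1^{\rm ST}$ of Section~\ref{sec:02} is \emph{exactly} the first-step sparse loading vector $\hat{u}_1$ produced by RSPCA when the hard thresholding penalty is used, with the penalty parameter playing the role of the threshold $\lambda$. Therefore, for the hard thresholding penalty the statement is an immediate consequence of Theorem~\ref{Th:02}, and nothing further is needed. The real content of Theorem~\ref{Th:03} is to extend this conclusion to the soft thresholding and SCAD penalties.

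For the soft and SCAD penalties, I would argue that the first-step loading vector differs from the ST (hard-threshold) vector only by a perturbation that is asymptotically negligible in angle. Concretely, write the first-step vector as $\tilde{u}_1 = h_\lambda(X_{(d)}\tilde{v}_1)$ with $X_{(d)}\tilde{v}_1 = (\tilde{u}_{1,1},\dots,\tilde{u}_{d,1})^T$ as in~\eqref{eq:dualeigevector}. For each of the three thresholding functions, every coordinate with $|\tilde{u}_{i,1}| \le \lambda$ is set to zero, so the \emph{support} of the first-step vector is identical to that of $\hat{u}_1^{\rm ST}$. The only difference is that the soft threshold shrinks each surviving coordinate toward zero by an amount at most $\lambda$, while SCAD shrinks by an amount at most $\lambda$ on an intermediate range and not at all for large coordinates. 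The key step is thus to show that $\|h_\lambda^{\rm soft}(X_{(d)}\tilde{v}_1) - h_\lambda^{\rm hard}(X_{(d)}\tilde{v}_1)\| = O_p(\sqrt{\lfloor d^\beta\rfloor}\,\lambda)$ (at most $\lambda$ on each of at most $\lfloor d^\beta \rfloor + (\text{noise survivors})$ coordinates), while under the conditions of Theorem~\ref{Th:02} one has $\|h_\lambda^{\rm hard}(X_{(d)}\tilde{v}_1)\| = \|\breve u_1\|$ of a strictly larger order. Indeed, the proof of Theorem~\ref{Th:02} shows the surviving signal coordinates $\tilde{u}_{i,1}$, $i \le \lfloor d^\beta\rfloor$, have magnitude of order $d^{(\alpha-\eta)/2}/\sqrt{?}$ times $\sqrt{\lfloor d^\beta\rfloor}$ in norm, which dominates $\sqrt{\lfloor d^\beta \rfloor}\,\lambda$ precisely because condition (e) of Theorem~\ref{Th:02}, $\lambda \le d^{\gamma/2}$ with $\gamma < \alpha - \eta$, forces $\lambda$ to be of smaller order than the signal. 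A standard perturbation bound — if $\|a - b\|/\|a\| \to 0$ then $\mathrm{Angle}(a,b) \to 0$ — then transfers the consistency of $\hat{u}_1^{\rm ST}$ from Theorem~\ref{Th:02} to the soft- and SCAD-thresholded first-step vectors.

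I would organize the write-up as: (i) invoke the ST/RSPCA equivalence to dispose of the hard penalty; (ii) record that all three $h_\lambda$ share the same zero pattern, so the support control from the proof of Theorem~\ref{Th:02} applies verbatim (noise coordinates killed, signal coordinates retained); (iii) bound the coordinatewise discrepancy between $h_\lambda^{\rm soft}$ (resp.\ SCAD) and $h_\lambda^{\rm hard}$ by $\lambda$ per surviving coordinate, hence bound the vector discrepancy in norm; (iv) compare this with the lower bound on $\|\breve u_1\|$ derived in the proof of Theorem~\ref{Th:02} and conclude the ratio tends to $0$ in probability using condition (e); (v) apply the angle–perturbation lemma.

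The main obstacle is step (iii)–(iv): one must be careful that the number of \emph{surviving noise coordinates} is controlled, since the discrepancy bound is $\lambda$ times the number of nonzero coordinates of the first-step vector, not just $\lfloor d^\beta\rfloor$. The mixing condition~\eqref{mixing} and the lower-threshold requirement $\lambda \ge \log^\delta(d)\,d^{\theta/2}$ in the proof of Theorem~\ref{Th:02} are exactly what ensure (with high probability) that no pure-noise coordinate $h_{i,j}$-driven entry exceeds $\lambda$, so the surviving set is precisely the $\lfloor d^\beta\rfloor$ signal coordinates; this must be cited from the proof of Theorem~\ref{Th:02} rather than re-derived. For SCAD one additionally uses that its shrinkage is bounded by $\lambda$ uniformly and vanishes for coordinates above the SCAD cutoff $a\lambda$, so if anything the SCAD discrepancy is even smaller than the soft one, and the same bound suffices.
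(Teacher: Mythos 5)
Your argument is essentially correct, but it is organized differently from the paper's. The paper defers the proof of Theorem~\ref{Th:03} to the supplement and describes it as a \emph{modification} of the proofs of Theorems~\ref{Th:02} and~\ref{corr:01}: the same numerator/denominator decomposition of $|\langle\hat{u}_1,u_1\rangle|$ as in \eqref{innerproduct} is rerun with the general thresholding function $h_\lambda$ in place of the hard-threshold indicator, using that the soft and SCAD rules also annihilate every coordinate with $|\tilde{u}_{i,1}|\le\lambda$ and move each surviving coordinate by at most $\lambda$, these perturbations being absorbed into the same error terms \eqref{noiseestimate:01}--\eqref{noiseestimate:04}; this yields the rate statement of Theorem~\ref{corr:02} in the same pass. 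You instead dispose of the hard-threshold case by the ST/RSPCA equivalence (legitimate: the paper states this equivalence explicitly) and then transfer consistency to the soft and SCAD cases by an angle-perturbation argument against $\breve{u}_1$, using the common support and the per-coordinate shrinkage bound $\lambda$. That is a more modular route: it uses Theorem~\ref{Th:02} as a black box and isolates the only genuinely new ingredient, at the cost of not directly producing the convergence rates of Theorem~\ref{corr:02}.

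Two details in your sketch need tightening before it is a proof. First, the norm comparison behind your step (iv): from \eqref{converge:down}, $\|\breve{u}_1\|=\lambda_1^{1/2}\bigl(|\tilde{v}_1^T\tilde{W}_1|+o_p(1)\bigr)\asymp d^{\alpha/2}$, while your discrepancy bound is at most $d^{(\beta+\gamma)/2}$ when $\lambda\le d^{\gamma/2}$; to conclude $\beta+\gamma<\alpha$ from condition (e), which only gives $\gamma<\alpha-\eta$, you must observe that $\eta\ge\beta$, which is forced because $u_1$ is a unit vector with $\lfloor d^\beta\rfloor$ nonzero entries so its smallest nonzero entry is at most $d^{-\beta/2}$ — your placeholder $\sqrt{?}$ sits exactly at this point. (Alternatively argue coordinatewise: each surviving signal coordinate is of order at least $d^{(\alpha-\eta)/2}|\tilde{v}_1^T\tilde{W}_1|\gg\lambda$.) Second, the proof of Theorem~\ref{Th:02} does \emph{not} establish that no noise coordinate survives; it only bounds the aggregate noise contribution via \eqref{noiseestimate:03}, so you cannot simply cite a no-survivor claim from it. Either prove it yourself by a union bound (the lower bound $\lambda\ge\log^\delta(d)\,d^{\theta/2}$ with $\delta>\frac12$ beats the Gaussian tails of the $h_{i,j}$ over $d$ coordinates; the mixing condition \eqref{mixing} is not what is needed here), or bypass it: on any surviving noise coordinate the soft/SCAD shrinkage is at most $\lambda<|\breve{u}_{i,1}|$, so the noise part of your discrepancy is dominated by the noise part of $\breve{u}_1$, which \eqref{noiseestimate:03} already shows is $o_p(\lambda_1^{1/2})$. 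With these two repairs your perturbation argument goes through.
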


Theorem~\ref{Th:03} explores conditions  when the first iteration of the iterative procedure of RSPCA
gives a consistent sparse
loading vector $\hat{u}_1$, with an appropriate thresholding parameter $\lambda$. Similar to the ST estimator, for
different parameters $\lambda$, $\hat{u}_1$ is consistent with $u_1$ with
different convergence rates. The result is given in the following Theorem~\ref{corr:02}.
\begin{theorem}
For the thresholding parameter $\lambda= o(d^{\frac{\alpha-\eta-\varsigma}{2}})$,
where $\varsigma \in [0, \alpha-\eta-\theta)$,
the sparse loading vector $\hat{u}_1$ in Theorem \ref{Th:03} is consistent
with $u_1$,  with a convergence rate of $d^{\frac{\varsigma}{2}}$.
\label{corr:02}
\end{theorem}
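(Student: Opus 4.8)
The plan is to upgrade the consistency conclusion of Theorem~\ref{Th:03} to a rate by tracking, in the decomposition already used in its proof, the exact orders in $d$ of the ``signal'' term and the ``perturbation'' term, now as functions of $\lambda$; this is the RSPCA counterpart of Theorem~\ref{corr:01}. Recall from the discussion preceding Theorem~\ref{Th:03} that the first-step loading vector is $\hat{u}_1=\breve{u}_1/\|\breve{u}_1\|$ with $\breve{u}_1=h_{\lambda}(X_{(d)}\tilde{v}_1)=h_{\lambda}(\tilde{u}_1)$, and that the soft, hard and SCAD thresholding maps share the elementary properties $h_{\lambda}(x)=0$ for $|x|\le\lambda$, $|h_{\lambda}(x)|\le|x|$, and $|h_{\lambda}(x)-x|\le a\lambda$ for a penalty-dependent constant $a$, with $a=0$ for hard thresholding (so that case is exactly the ST estimator of Theorem~\ref{corr:01}). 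Since $X_j=z_{1,j}u_1+H_j$, writing $g\equiv\sum_{j=1}^{n}\tilde{v}_{1,j}z_{1,j}$ and $e_i\equiv\sum_{j=1}^{n}\tilde{v}_{1,j}h_{i,j}$ gives $\tilde{u}_{i,1}=g\,u_{i,1}+e_i$, so on the support of $u_1$ we have $\breve{u}_{i,1}=g\,u_{i,1}+p_i$ with $|p_i|\le|e_i|+a\lambda$.

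I would then import from the proofs of Theorems~\ref{Th:02}--\ref{Th:03} the two quantitative facts they already establish: (i) $|g|\asymp d^{\alpha/2}$ with probability tending to one (the dual eigenvector $\tilde{v}_1$ aligns with $(z_{1,1},\ldots,z_{1,n})^{T}$, and $z_{1,j}\sim N(0,\lambda_1)$ with $\lambda_1\sim d^{\alpha}$); and (ii) the mixing condition~\eqref{mixing} makes $\max_{i>\lfloor d^{\beta}\rfloor}|\tilde{u}_{i,1}|$ and $\max_{1\le i\le\lfloor d^{\beta}\rfloor}|e_i|$ both of order $O_p(\log^{1/2}(d)\,d^{\theta/2})$. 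Because condition~(e) of Theorem~\ref{Th:02} forces $\lambda\ge\log^{\delta}(d)\,d^{\theta/2}$ with $\delta>\tfrac12$, fact~(ii) shows that on an event of probability tending to one every off-support coordinate is zeroed out while every support coordinate survives, so $\breve{u}_1=g\,u_1+p$ with $p$ supported on the first $\lfloor d^{\beta}\rfloor$ coordinates; moreover $\max_i|e_i|=o_p(\lambda)$, hence $\|p\|^2\le\lfloor d^{\beta}\rfloor\,(\max_i|e_i|+a\lambda)^2=O_p(d^{\beta}\lambda^2)$. Using $\|u_1\|=1$ we get $\langle\breve{u}_1,u_1\rangle=g+\langle p,u_1\rangle$ and $\|\breve{u}_1\|^2-\langle\breve{u}_1,u_1\rangle^2=\|p\|^2-\langle p,u_1\rangle^2\le\|p\|^2$, so
\begin{equation*}
1-|\langle\hat{u}_1,u_1\rangle|\;\le\;\frac{\|p\|^2}{2\,(g+\langle p,u_1\rangle)^2}.
\end{equation*}
Condition~(c) together with $\|u_1\|=1$ forces $\eta\ge\beta$, and $\lambda=o(d^{(\alpha-\eta-\varsigma)/2})$ then gives $\|p\|=O_p(d^{\beta/2}\lambda)=o_p(d^{\alpha/2})$, whence $|g+\langle p,u_1\rangle|\asymp d^{\alpha/2}$ and $1-|\langle\hat{u}_1,u_1\rangle|=O_p(d^{\beta-\alpha}\lambda^2)=o_p(d^{\beta-\eta-\varsigma})=o_p(d^{-\varsigma})$; the complementary (vanishing-probability) event contributes negligibly since all quantities are bounded by $1$. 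Thus $|\langle\hat{u}_1,u_1\rangle|=1+o_p(d^{-\varsigma})$, which yields the stated convergence rate $d^{\varsigma/2}$.

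The only real obstacle is facts~(i)--(ii): the two-sided order $|g|\asymp d^{\alpha/2}$, which needs control of the limiting dual eigenvector in the spirit of Jung and Marron~\cite{jung2009pca} and Jung, Sen and Marron~\cite{Jung2010}, and the logarithmic control of the extreme order statistics of the $\lfloor d^{\beta}\rfloor$ noise coordinates under~\eqref{mixing}. Both are already obtained in the proofs of Theorems~\ref{Th:01}--\ref{Th:03}, so the present argument is essentially bookkeeping; the bound $|h_{\lambda}(x)-x|\le a\lambda$ is precisely what lets the soft and SCAD penalties be handled on the same footing as the hard-thresholding/ST case of Theorem~\ref{corr:01}.
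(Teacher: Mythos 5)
Your argument is sound and establishes the theorem; in fact it proves slightly more. The paper relegates the proof of this result to the supplement as a ``modification'' of the proofs of Theorems~\ref{Th:02} and~\ref{corr:01}, and that argument starts from the same coordinate decomposition you use ($\tilde{v}_1^T\tilde{X}_i=u_{i,1}\tilde{v}_1^T\tilde{Z}_1+\tilde{v}_1^T\tilde{H}_i$, your $g u_{i,1}+e_i$), but then bounds the numerator and denominator of $|\langle\hat{u}_1,u_1\rangle|$ separately, each as $|\tilde{v}_1^T\tilde{W}_1|+o_p(d^{-\varsigma/2})$ (the chain \eqref{upestimate:01}--\eqref{converge:down}), with error terms that are \emph{linear} in the noise and with indicator-sum corrections such as \eqref{noiseestimate:03} and \eqref{noiseestimate:04} in place of your clean support-recovery event. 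Your bookkeeping differs in two useful ways: (i) the Pythagorean identity $\|\breve{u}_1\|^2-\langle\breve{u}_1,u_1\rangle^2=\|p\|^2-\langle p,u_1\rangle^2$ makes the error \emph{quadratic} in the perturbation, so you obtain $|\langle\hat{u}_1,u_1\rangle|=1+o_p(d^{-\varsigma})$, which is stronger than the claimed $1+o_p(d^{-\varsigma/2})$ and of course implies it; (ii) the uniform contraction bound $|h_\lambda(x)-x|\le a\lambda$ treats soft, hard and SCAD thresholding in one stroke, which is exactly the point where the RSPCA case differs from the ST case. Your observation that condition (c) plus $\|u_1\|=1$ forces $\eta\ge\beta$ is correct and is what makes $\|p\|=O_p(d^{\beta/2}\lambda)=o_p(d^{\alpha/2})$ work.

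Two justifications need patching, though neither is a real gap. First, the mixing condition \eqref{mixing} is only imposed on the block $1\le k\neq l\le\lfloor d^\beta\rfloor$, so it cannot be cited for $\max_{i>\lfloor d^\beta\rfloor}|\tilde{u}_{i,1}|$; for those roughly $d$ coordinates you should instead use the Gaussian tail/union bound over the $nd$ variables $h_{i,j}$ (variances $\le\lambda_2\sim d^{\theta}$), exactly as in the derivation of \eqref{noiseestimate:03}, which together with $\lambda\ge\log^{\delta}(d)\,d^{\theta/2}$, $\delta>\tfrac12$, gives the off-support zeroing with probability tending to one (the same union bound also covers $\max_{i\le\lfloor d^\beta\rfloor}|e_i|=o_p(\lambda)$, so mixing is not essential to your route). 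Second, two small analytic points: the factor $2$ in your displayed bound is not justified until $|\langle\hat{u}_1,u_1\rangle|$ is already known to be near $1$ --- simply drop it, since $1-|c|\le 1-c^2$; and ``$|g|\asymp d^{\alpha/2}$ with probability tending to one'' should be phrased as $d^{-\alpha/2}|g|$ being bounded above and away from zero in probability, which follows from \eqref{v:convergence} because the limit variable is almost surely positive and finite, followed by the usual $\varepsilon$-argument when you intersect with the support-recovery event.
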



We then set $\hat{u}^{\rm old}_1$ to be the consistent sparse
loading vector obtained after the first iteration of the RSPCA
algorithm. We then obtain an updated estimate for $v_1$ as
$\tilde{v}^{\rm new}_1=X^T_{(d)} \hat{u}^{\rm old}_1/\| X^T_{(d)}
\hat{u}^{\rm old}_1\|$. The theorem below studies the asymptotic
properties of $\tilde{v}^{\rm new}_1$.
\begin{theorem}
Assume that $\hat{u}^{\rm old}_1$  is consistent with $u_1$ with the
convergence rate $d^{\frac{\varsigma}{2}}$, where
 $\varsigma \in [1-\alpha, \infty)$.
 If the  $\varepsilon$-condition is satisfied, then
 \begin{eqnarray*}
  \tilde{v}^{\rm new}_1 \xrightarrow{p} \frac{\tilde{W}_1}{\|\tilde{W}_1\|},
  \quad \mbox{as}\quad d\rightarrow \infty,
 \end{eqnarray*}
 where $\tilde{W}_1=(w_{1,1},\cdot\cdot\cdot,w_{1,n})$ follows a standard $n$-dimensional
 normal distribution $N(0, I_n)$ and the $w_{i,j}$ are defined in (\ref{eq:w}).\label{Th:pc}
 \end{theorem}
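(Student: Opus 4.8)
The plan is to substitute the definitions and show that the noise directions wash out, leaving only the $u_1$-component contribution to $\tilde v_1^{\rm new}$. Recall $X_{(d)} = \sum_{i=1}^d u_i Z_i^{(r)}$ in row form; more precisely, writing $X_j = \sum_{i=1}^d z_{i,j} u_i$ as in the excerpt, we get $X_{(d)}^T \hat u_1^{\rm old} = \sum_{i=1}^d \langle u_i, \hat u_1^{\rm old}\rangle\, (z_{i,1},\dots,z_{i,n})^T$. Since $\hat u_1^{\rm old}$ is consistent with $u_1$ with rate $d^{\varsigma/2}$, we have $\langle u_1, \hat u_1^{\rm old}\rangle = 1 + o_p(d^{-\varsigma})$, and hence $\sum_{i\geq 2}\langle u_i,\hat u_1^{\rm old}\rangle^2 = 1 - \langle u_1,\hat u_1^{\rm old}\rangle^2 = o_p(d^{-\varsigma})$. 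Writing $\tilde W_1 = (z_{1,1},\dots,z_{1,n})^T/\lambda_1^{1/2} = (w_{1,1},\dots,w_{1,n})^T \sim N(0,I_n)$, the plan is to show
\begin{equation*}
\lambda_1^{-1/2} X_{(d)}^T \hat u_1^{\rm old} = \langle u_1,\hat u_1^{\rm old}\rangle\, \tilde W_1 + \lambda_1^{-1/2}\sum_{i\geq 2}\langle u_i,\hat u_1^{\rm old}\rangle\,(z_{i,1},\dots,z_{i,n})^T \xrightarrow{p} \tilde W_1,
\end{equation*}
and then the conclusion follows by continuity of $x \mapsto x/\|x\|$ on $\mathbb{R}^n\setminus\{0\}$, noting $\|\tilde W_1\| > 0$ a.s.

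The first term converges to $\tilde W_1$ because $\langle u_1,\hat u_1^{\rm old}\rangle \xrightarrow{p} 1$. The work is in the remainder term. Conditionally on $\hat u_1^{\rm old}$ (which can be arranged to be independent of the relevant residual randomness, or handled by a truncation argument since only the scalars $\langle u_i,\hat u_1^{\rm old}\rangle$ enter), each coordinate $\lambda_1^{-1/2}\sum_{i\geq 2}\langle u_i,\hat u_1^{\rm old}\rangle z_{i,j}$ is, for fixed $j$, a mean-zero Gaussian with variance $\lambda_1^{-1}\sum_{i\geq 2}\langle u_i,\hat u_1^{\rm old}\rangle^2\lambda_i \leq \lambda_1^{-1}\lambda_2\sum_{i\geq 2}\langle u_i,\hat u_1^{\rm old}\rangle^2$. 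Using $\lambda_2 \sim d^\theta$, $\lambda_1\sim d^\alpha$ with $\theta<\alpha$, plus $\sum_{i\geq 2}\langle u_i,\hat u_1^{\rm old}\rangle^2 = o_p(d^{-\varsigma})$, this variance is $o_p(d^{\theta-\alpha-\varsigma}) = o_p(1)$; since $n$ is fixed, Markov's inequality across the finitely many coordinates gives the remainder $\xrightarrow{p} 0$. Actually one should double-check whether the hypothesis $\varsigma\geq 1-\alpha$ is needed here or elsewhere: it is likely invoked to control $\lambda_1^{-1}\sum_{i\geq 2}\langle u_i,\hat u_1^{\rm old}\rangle^2\lambda_i$ via the cruder bound $\sum_{i\geq 2}\langle u_i,\hat u_1^{\rm old}\rangle^2\lambda_i \leq \|\hat u_1^{\rm old} - \langle u_1,\hat u_1^{\rm old}\rangle u_1\|^2 \cdot \max_{i\geq2}\lambda_i$ combined with the $\varepsilon$-condition to replace $\max_{i\ge2}\lambda_i$ by something like $\sum_{i\geq2}\lambda_i \sim d$, which forces $d^{1-\alpha-\varsigma}\to 0$, i.e. $\varsigma > 1-\alpha$. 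I would present the argument using whichever bound the $\varepsilon$-condition actually licenses.

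The main obstacle is the dependence between $\hat u_1^{\rm old}$ and the $z_{i,j}$: $\hat u_1^{\rm old}$ is built from the same data matrix $X_{(d)}$, so the scalars $\langle u_i,\hat u_1^{\rm old}\rangle$ are not independent of $(z_{i,1},\dots,z_{i,n})$. I would handle this by conditioning: the bound on the conditional variance above holds for \emph{every} realization of the vector $(\langle u_i,\hat u_1^{\rm old}\rangle)_{i\geq 2}$ subject to $\sum_{i\geq2}\langle u_i,\hat u_1^{\rm old}\rangle^2 \leq \epsilon_d$, so on the event $\{\sum_{i\geq2}\langle u_i,\hat u_1^{\rm old}\rangle^2 \leq \epsilon_d\}$ — which has probability $\to 1$ for a suitable deterministic $\epsilon_d = o(d^{-\varsigma})$ — the conditional probability that the remainder exceeds $\epsilon > 0$ is uniformly small, and taking expectations removes the conditioning. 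The other technical point is verifying $\tilde W_1$ has the stated $N(0,I_n)$ law and is a.s.\ nonzero; this is immediate from the definition of $w_{i,j}$ in \eqref{eq:w} and the fact that $z_{1,1},\dots,z_{1,n}$ are iid $N(0,\lambda_1)$. Assembling these pieces via Slutsky's theorem delivers $\tilde v_1^{\rm new} \xrightarrow{p} \tilde W_1/\|\tilde W_1\|$.
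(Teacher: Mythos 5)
Your skeleton is the natural one (and, since the paper relegates this proof to its supplement, it is the decomposition one would expect there): write $X_{(d)}^T\hat u_1^{\rm old}=\sum_{i=1}^d\langle u_i,\hat u_1^{\rm old}\rangle\,\tilde Z_i$, pull out the $u_1$-term, which after scaling by $\lambda_1^{-1/2}$ gives $\langle u_1,\hat u_1^{\rm old}\rangle\tilde W_1$, and kill the remainder using $\sum_{i\ge2}\langle u_i,\hat u_1^{\rm old}\rangle^2=1-\langle u_1,\hat u_1^{\rm old}\rangle^2=o_p(d^{-\varsigma})$. The genuine gap is in how you kill the remainder. Your variance computation treats the coordinates $\lambda_1^{-1/2}\sum_{i\ge2}\langle u_i,\hat u_1^{\rm old}\rangle z_{i,j}$ as conditionally Gaussian with variance $\lambda_1^{-1}\sum_{i\ge2}\langle u_i,\hat u_1^{\rm old}\rangle^2\lambda_i$, given the coefficients. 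But $\hat u_1^{\rm old}$ is computed from the same data matrix, so the coefficients $\langle u_i,\hat u_1^{\rm old}\rangle$ and the variables $z_{i,j}$ are dependent; conditioning on the realization of the coefficients, or on the event $\{\sum_{i\ge2}\langle u_i,\hat u_1^{\rm old}\rangle^2\le\epsilon_d\}$, does not leave the $z_{i,j}$ with their unconditional $N(0,\lambda_i)$ law, so neither the Gaussianity nor the stated conditional variance bound survives, and "taking expectations removes the conditioning" has no valid conditional bound to integrate. The parenthetical hope that $\hat u_1^{\rm old}$ "can be arranged to be independent of the relevant residual randomness" is not available here: no sample splitting is performed in the RSPCA iteration. (Also, your sharper bound via $\lambda_2\sim d^\theta$ invokes an assumption that is not among the hypotheses of this theorem, which is precisely why the statement carries the condition $\varsigma\ge 1-\alpha$ instead.)

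The gap closes with a pathwise bound that needs no conditioning at all, and doing so explains the hypothesis exactly: by Cauchy--Schwarz, for each fixed $j$, $\bigl|\lambda_1^{-1/2}\sum_{i\ge2}\langle u_i,\hat u_1^{\rm old}\rangle z_{i,j}\bigr|\le\lambda_1^{-1/2}\bigl(\sum_{i\ge2}\langle u_i,\hat u_1^{\rm old}\rangle^2\bigr)^{1/2}\bigl(\sum_{i\ge2}z_{i,j}^2\bigr)^{1/2}$, which holds for every realization regardless of dependence. The first factor is $o_p(d^{-\varsigma/2})$ by the assumed rate; since $E\sum_{i\ge2}z_{i,j}^2=\sum_{i\ge2}\lambda_i\sim d$, Markov's inequality gives $\sum_{i\ge2}z_{i,j}^2=O_p(d)$, so each coordinate is $o_p\bigl(d^{(1-\alpha-\varsigma)/2}\bigr)=o_p(1)$; note that $\varsigma=1-\alpha$ already suffices because of the $o_p$ in the rate, so your worry that strict inequality $\varsigma>1-\alpha$ is forced is unfounded. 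With $n$ fixed, the remainder vector is $o_p(1)$, and your final step is fine: $\langle u_1,\hat u_1^{\rm old}\rangle\to1$ (after fixing the sign of $\hat u_1^{\rm old}$, since consistency only controls the absolute inner product), $\tilde W_1\sim N(0,I_n)$ is a.s.\ nonzero, and continuity of $x\mapsto x/\|x\|$ away from the origin yields $\tilde v_1^{\rm new}\xrightarrow{p}\tilde W_1/\|\tilde W_1\|$.
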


Since Theorem~\ref{Th:pc} establishes the asymptotic properties of
$\tilde{v}^{\rm new}_1$, we can now study the asymptotic properties
of the updated sparse loading vector
\begin{eqnarray}
\hat{u}^{\rm new}_1=\frac{\tilde{u}^{\rm new}_1}{\| \tilde{u}^{\rm
new}_1\|},\quad {\rm with}\quad \tilde{u}^{\rm
new}_1=h_{\lambda}(X_{(d)}\tilde{v}_1^{\rm new}), \label{u_rSVD}
\end{eqnarray}
as defined in the iterative procedure of RSPCA. The following
Theorem~\ref{Th:04} shows that with a proper choice of the thresholding parameter $\lambda$, the updated sparse loading vector $\hat{u}^{\rm
new}_1$ remains to be consistent with the population eigenvector
$u_1$.
\begin{theorem} Under the assumptions and conditions of Theorems~{\ref{Th:02}} and~\ref{Th:pc},
the updated sparse loading vector $\hat{u}^{\rm new}_1$ in
(\ref{u_rSVD}) is consistent with $u_1$.\label{Th:04}
\end{theorem}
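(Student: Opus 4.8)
The plan is to re-run the first-step argument behind Theorem~\ref{Th:03} (equivalently Theorem~\ref{Th:02}), with the right singular vector $\tilde v_1$ replaced throughout by $\tilde v_1^{\rm new}$, and to feed in Theorem~\ref{Th:pc} for the limiting behaviour of $\tilde v_1^{\rm new}$. Writing $X_j=\sum_{i=1}^d z_{i,j}u_i$ with $z_{i,j}=\lambda_i^{1/2}w_{i,j}$ and recalling $H_j=\sum_{i\ge 2}z_{i,j}u_i=(h_{1,j},\ldots,h_{d,j})^T$ from (\ref{eq:h}), the vector $\tilde u_1^{\rm new}=h_\lambda(X_{(d)}\tilde v_1^{\rm new})$ in (\ref{u_rSVD}) is obtained by applying $h_\lambda$ coordinatewise to
\begin{equation*}
\bigl(X_{(d)}\tilde v_1^{\rm new}\bigr)_k=\lambda_1^{1/2}u_{k,1}\,a_d+g_{k,d},\qquad k=1,\ldots,d,
\end{equation*}
where $a_d=\sum_{j=1}^n(\tilde v_1^{\rm new})_j w_{1,j}$ and $g_{k,d}=\sum_{j=1}^n(\tilde v_1^{\rm new})_j h_{k,j}$. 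By Theorem~\ref{Th:pc}, $\tilde v_1^{\rm new}\xrightarrow{p}\tilde W_1/\|\tilde W_1\|$, hence $a_d\xrightarrow{p}\|\tilde W_1\|>0$; so $\lambda_1^{1/2}u_{k,1}a_d$ is the signal at coordinate $k$ and $g_{k,d}$ is the noise. The only place Theorem~\ref{Th:pc} enters is through $a_d$; the noise $g_{k,d}$ is controlled using merely $\|\tilde v_1^{\rm new}\|=1$.

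First I would bound the noise uniformly in $k$. Because $\{u_i\}$ is orthonormal, $\mathrm{Var}(h_{k,j})=\sum_{i\ge 2}u_{k,i}^2\lambda_i\le\lambda_2\sim d^{\theta}$, while $\sum_{k}\mathrm{Var}(h_{k,j})=\sum_{i\ge 2}\lambda_i\sim d$. By Cauchy--Schwarz $|g_{k,d}|\le\sqrt n\,\max_{1\le j\le n}|h_{k,j}|$, so a Gaussian tail bound and a union bound over the $d$ coordinates --- together with the mixing condition (d) and the extreme-value estimates of Leadbetter, Lindgren and Rootzen invoked in the proof of Theorem~\ref{Th:02} --- give $\max_{1\le k\le d}|g_{k,d}|=O_p\bigl(d^{\theta/2}\log^{1/2}d\bigr)$, which is $o_p(\lambda)$ because $\lambda\ge\log^\delta(d)\,d^{\theta/2}$ with $\delta>\tfrac12$. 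The only feature new relative to Theorem~\ref{Th:02} is the random weights $(\tilde v_1^{\rm new})_j$; being of unit total norm they disappear after Cauchy--Schwarz.

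Next I would show that the zero pattern of $\tilde u_1^{\rm new}$ matches that of $u_1$ with probability tending to one. For $1\le k\le\lfloor d^\beta\rfloor$, condition (c) gives $|u_{k,1}|\ge c\,d^{-\eta/2}$, so $|\lambda_1^{1/2}u_{k,1}a_d|\ge c'\,d^{(\alpha-\eta)/2}\|\tilde W_1\|$; since $\gamma\in(\theta,\alpha-\eta)$, this dominates both $\lambda\le d^{\gamma/2}$ and $\max_k|g_{k,d}|$, so all these coordinates survive $h_\lambda$. For $k>\lfloor d^\beta\rfloor$ the entry is $g_{k,d}$, hence $\le\lambda$, and is zeroed. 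On the surviving coordinates $h_\lambda$ alters the value by at most $\lambda$ (soft thresholding), $0$ (hard thresholding), or eventually $0$ (SCAD, which leaves sufficiently large values unchanged). Thus $\tilde u_1^{\rm new}=\lambda_1^{1/2}a_d\,u_1+E_d$ with $E_d$ supported on $\{1,\ldots,\lfloor d^\beta\rfloor\}$ and $\|E_d\|\le\sqrt{\lfloor d^\beta\rfloor}\,(\max_k|g_{k,d}|+\lambda)=O_p\bigl(d^{(\beta+\gamma)/2}\bigr)$. Since $\|u_1\|=1$ forces $\beta\le\eta$ (its $\lfloor d^\beta\rfloor$ non-zero entries are each at least of order $d^{-\eta/2}$), we have $\beta+\gamma<\eta+(\alpha-\eta)=\alpha$, so $\|E_d\|=o_p\bigl(\lambda_1^{1/2}|a_d|\bigr)=o_p\bigl(\|\lambda_1^{1/2}a_d\,u_1\|\bigr)$. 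Therefore
\begin{equation*}
\bigl|\langle\hat u_1^{\rm new},u_1\rangle\bigr|=\frac{\bigl|\lambda_1^{1/2}a_d+\langle E_d,u_1\rangle\bigr|}{\|\lambda_1^{1/2}a_d\,u_1+E_d\|}\xrightarrow{p}1 ,
\end{equation*}
that is, $\hat u_1^{\rm new}$ is consistent with $u_1$.

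The main obstacle is the step just described: converting the rate-free in-probability convergence of $\tilde v_1^{\rm new}$ into a statement about the zero pattern produced by $h_\lambda$, a map that is discontinuous for hard thresholding. This requires the signal lower bound of order $d^{(\alpha-\eta)/2}$, the threshold $\lambda\le d^{\gamma/2}$, and the uniform noise bound $O_p(d^{\theta/2}\log^{1/2}d)$ to be separated by genuine polynomial gaps (guaranteed by $\theta<\gamma<\alpha-\eta$), so that the support of $\tilde u_1^{\rm new}$ coincides with that of $u_1$ with probability tending to one, uniformly over all $d$ coordinates, despite the data-dependent but norm-bounded weights inherited from $\tilde v_1^{\rm new}$, and so that the three thresholding functions can be handled uniformly. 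Once this is in place, the remainder is the bookkeeping already carried out for Theorem~\ref{Th:02}.
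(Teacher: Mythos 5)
Your argument is correct, and it follows essentially the route the paper itself prescribes: the paper relegates the proof of Theorem~\ref{Th:04} to the supplement, describing it as a modification of the proofs of Theorems~\ref{Th:02} and~\ref{corr:01} with $\tilde{v}_1$ replaced by $\tilde{v}_1^{\rm new}$ and Theorem~\ref{Th:pc} supplying the non-degeneracy of the signal coefficient $\langle\tilde{v}_1^{\rm new},\tilde{W}_1\rangle$, which is exactly your plan. The difference is in the bookkeeping. The paper's Theorem~\ref{Th:02} computation controls $|\langle\hat{u}_1,u_1\rangle|$ by bounding numerator and denominator separately via weighted sums such as $\sum_{i\le\lfloor d^\beta\rfloor}\sum_{j}\lambda_1^{-1/2}|u_{i,1}h_{i,j}|$, where the weights $u_{i,1}$ do the work and no relation between $\beta$ and $\eta$ is ever invoked; you instead prove exact support recovery with probability tending to one and then use the crude bound $\|E_d\|\le\sqrt{\lfloor d^\beta\rfloor}\,\bigl(\max_k|g_{k,d}|+\lambda\bigr)=O_p\bigl(d^{(\beta+\gamma)/2}\bigr)$, which is $o_p(d^{\alpha/2})$ only because of your observation that $\|u_1\|=1$ together with condition (c) forces $\beta\le\eta$, hence $\beta+\gamma<\alpha$. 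That observation is correct and is the one genuinely new ingredient your version needs; the Cauchy--Schwarz step $|g_{k,d}|\le\sqrt{n}\max_j|h_{k,j}|$ also correctly neutralizes the data-dependence of $\tilde{v}_1^{\rm new}$ (and, as a side remark, your uniform noise bound follows from a plain Gaussian union bound, so the appeal to the mixing condition there is unnecessary, though harmless since it is assumed). In short: same decomposition and the same separation $d^{(\alpha-\eta)/2}\gg d^{\gamma/2}\gg d^{\theta/2}\log^{1/2}d$; your variant buys an explicit support-consistency statement and a shorter error analysis, at the mild cost of leaning on the implicit constraint $\beta\le\eta$, whereas the paper's weighted estimates avoid that constraint and also deliver the convergence rate of Theorem~\ref{corr:03}, which your crude bound does not directly give.
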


For different threshold parameters $\lambda$, $\hat{u}^{\rm new}_1$
is again consistent with $u_1$ with different convergence rates, as
seen in the following theorem.
\begin{theorem}
For the thresholding parameter $\lambda=
o(d^{\frac{\alpha-\eta-\varsigma}{2}})$, where $\varsigma \in [0,
\alpha-\eta-\theta)$, the updated sparse loading vector
$\hat{u}^{\rm new}_1$ in Theorem \ref{Th:04} is consistent with
$u_1$,  with convergence rate $d^{\frac{\varsigma}{2}}$
.\label{corr:03}
\end{theorem}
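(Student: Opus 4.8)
The plan is to run the proof of Theorem~\ref{Th:04} once more, but now carrying the size of each perturbation term along as an explicit power of $d$, in the same way Theorem~\ref{corr:01} refines Theorem~\ref{Th:02} and Theorem~\ref{corr:02} refines Theorem~\ref{Th:03}. Recall that $\hat u_1^{\rm new}=\tilde u_1^{\rm new}/\|\tilde u_1^{\rm new}\|$ with $\tilde u_1^{\rm new}=h_\lambda(X_{(d)}\tilde v_1^{\rm new})$. First I would use Theorem~\ref{Th:pc} to write $\tilde v_1^{\rm new}=\tilde W_1/\|\tilde W_1\|+\epsilon_v$ with $\|\epsilon_v\|=o_p(1)$, and then, writing $X_j=z_{1,j}u_1+H_j$ as in~(\ref{eq:z})--(\ref{eq:h}), decompose
\begin{equation*}
X_{(d)}\tilde v_1^{\rm new}=A\,u_1+\xi,\qquad A=\sum_{j=1}^n (\tilde v_1^{\rm new})_j z_{1,j},\qquad \xi_k=\sum_{j=1}^n (\tilde v_1^{\rm new})_j h_{k,j}.
\end{equation*}
Since $z_{1,j}=\lambda_1^{1/2}w_{1,j}$ and $\tilde W_1=(w_{1,1},\ldots,w_{1,n})$, the leading part of $A$ is $\lambda_1^{1/2}\|\tilde W_1\|$ while the $\epsilon_v$-contribution is $o_p(\|\tilde W_1\|)$ by Cauchy--Schwarz; hence $A=\lambda_1^{1/2}\|\tilde W_1\|(1+o_p(1))$, which is of exact order $d^{\alpha/2}$ and bounded away from $0$ in probability.

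The heart of the argument is then to control the noise vector $\xi$. Because the first coordinate of each $Z_j$ is independent of the remaining ones, $\tilde W_1$ is independent of the $h_{k,j}$, so conditionally on $\tilde v_1^{\rm new}$ each $\xi_k$ is a centered Gaussian with variance at most $n\,m_{kk}\le n\lambda_2$, which is of order $d^\theta$. A union/maximal bound over all $d$ coordinates — this is exactly where $\delta>1/2$ in the lower bound $\lambda\ge\log^\delta(d)d^{\theta/2}$ is used — gives $\max_{1\le k\le d}|\xi_k|=O_p(d^{\theta/2}\log^{1/2}d)$, while on the support, using the mixing condition~(\ref{mixing}) together with $\sum_{i\ge2}\lambda_i\sum_{k\le\lfloor d^\beta\rfloor}u_{k,i}^2\le\lambda_2\lfloor d^\beta\rfloor$, one obtains the $\ell_2$-bound $\sum_{k\le\lfloor d^\beta\rfloor}\xi_k^2=O_p(d^{\theta+\beta})$. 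With $\lambda$ in the band $\log^\delta(d)d^{\theta/2}\le\lambda=o(d^{(\alpha-\eta-\varsigma)/2})$, the threshold dominates $\max_k|\xi_k|$ with probability tending to one, so every off-support coordinate ($u_{k,1}=0$) is set to zero by $h_\lambda$; and for $k\le\lfloor d^\beta\rfloor$ condition~(c) forces $|u_{k,1}|$ to be of order at least $d^{-\eta/2}$, so $|Au_{k,1}|$ is of order at least $d^{(\alpha-\eta)/2}$, which dominates both $|\xi_k|$ and $\lambda$ (using $\theta<\alpha-\eta$). Hence the on-support coordinates survive with the correct sign, and $h_\lambda(Au_{k,1}+\xi_k)=Au_{k,1}+\xi_k-r_k$ with $|r_k|\le\lambda$ for the soft and SCAD penalties and $r_k=0$ (eventually $0$ for SCAD) for hard thresholding.

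Assembling these, $\tilde u_1^{\rm new}=A\,u_1+\zeta$ with $\zeta$ supported (with probability tending to one) on $\{1,\ldots,\lfloor d^\beta\rfloor\}$ and $\|\zeta\|^2\le 2\sum_{k\le\lfloor d^\beta\rfloor}\xi_k^2+2\sum_{k\le\lfloor d^\beta\rfloor}r_k^2=O_p(d^{\theta+\beta}+d^{\beta}\lambda^2)$. The elementary estimate for a unit vector perturbed by $\zeta/A$ then gives $1-|\langle\hat u_1^{\rm new},u_1\rangle|\le C\|\zeta\|^2/A^2=O_p(d^{\theta+\beta-\alpha}+d^{\beta-\alpha}\lambda^2)$. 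Plugging in $\lambda=o(d^{(\alpha-\eta-\varsigma)/2})$, the second term is $o_p(d^{\beta-\eta-\varsigma})$, which is $o_p(d^{-\varsigma/2})$ since $\beta\le\eta$ (forced by $\sum_k u_{k,1}^2=1$ and $\min_{k\le\lfloor d^\beta\rfloor}|u_{k,1}|\sim d^{-\eta/2}$), and the first term is at most $d^{-(\alpha-\eta-\theta)}=o(d^{-\varsigma/2})$ because $\varsigma<\alpha-\eta-\theta$. Therefore $|\langle\hat u_1^{\rm new},u_1\rangle|=1+o_p(d^{-\varsigma/2})$, i.e., $\hat u_1^{\rm new}$ is consistent with $u_1$ at rate $d^{\varsigma/2}$.

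I expect the noise step to be the main obstacle: one needs the control on $\xi$ to be simultaneously \emph{uniform} enough to zero out all $d-\lfloor d^\beta\rfloor$ off-support coordinates and \emph{tight} enough on the support (the $O_p(d^{\theta+\beta})$ $\ell_2$-bound) so that, after division by $A^2$ of order $d^\alpha$, the residual still meets the target rate $d^{-\varsigma/2}$; this is precisely where the mixing condition~(\ref{mixing}), the eigenvalue conditions~(a)--(b), and the lower bound on $\lambda$ enter. A secondary point to verify is that the $o_p(1)$ error in $\tilde v_1^{\rm new}$ supplied by Theorem~\ref{Th:pc} does not spoil the rate — it does not, because it enters only as a multiplicative $(1+o_p(1))$ factor on quantities that are already controlled.
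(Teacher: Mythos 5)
Your overall strategy is sound and, as far as one can tell from the main text, it is the same one the paper itself uses: the paper relegates the proof of Theorem~\ref{corr:03} to the supplement, describing it as a modification of the Section~\ref{sec:07} argument for Theorems~\ref{Th:02} and~\ref{corr:01}, i.e.\ decompose $X_j=z_{1,j}u_1+H_j$, use Theorem~\ref{Th:pc} for $\tilde v_1^{\rm new}$, show the threshold kills all off-support noise while every on-support entry of size $\gtrsim d^{(\alpha-\eta)/2}$ survives, and verify that the residual noise is $o_p(d^{-\varsigma/2})$ after scaling by $\lambda_1^{1/2}$, with the rate coming from $\lambda=o(d^{(\alpha-\eta-\varsigma)/2})$ exactly as in the passage from Theorem~\ref{Th:02} to Theorem~\ref{corr:01}. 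Your bookkeeping differs in two harmless ways: you use the one-shot perturbation bound $1-|\langle\hat u_1^{\rm new},u_1\rangle|\le C\|\zeta\|^2/A^2$ and plain Gaussian union bounds (so you never actually need the Leadbetter-type extreme-value machinery that the paper routes through the mixing condition~(\ref{mixing}); nor is the mixing condition needed for your $\ell_2$ bound, which is just Cauchy--Schwarz with $\|\tilde v_1^{\rm new}\|=1$ plus Markov), whereas the paper bounds the numerator and denominator of the inner product separately, weighting the on-support indicator terms by $u_{i,1}^2$ so that it never has to invoke $\beta\le\eta$ explicitly; your unweighted bound $O_p(d^{\theta+\beta})$ does need $\beta\le\eta$, but your normalization argument for that inequality is correct.

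One step is wrong as written and should be repaired, though your own setup already contains the fix: $\tilde v_1^{\rm new}=X_{(d)}^T\hat u_1^{\rm old}/\|X_{(d)}^T\hat u_1^{\rm old}\|$ is a function of the full data, hence \emph{not} independent of the $h_{k,j}$, so "conditionally on $\tilde v_1^{\rm new}$ each $\xi_k$ is centered Gaussian with variance at most $n m_{kk}$" is not justified (also, for a unit conditioning vector the variance bound would be $m_{kk}$, not $n m_{kk}$, though this does not affect the order). The correct route is the split you wrote down at the start: $\xi_k=\langle\tilde W_1/\|\tilde W_1\|,\tilde H_k\rangle+\langle\epsilon_v,\tilde H_k\rangle$. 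The first term is genuinely conditionally Gaussian with variance $m_{kk}\le\lambda_2$ because $\tilde W_1$ is independent of the $H_j$, and a union bound over $d$ coordinates gives $O_p(d^{\theta/2}\log^{1/2}d)$; the second is bounded uniformly in $k$ by $\|\epsilon_v\|\sqrt{n}\max_{k,j}|h_{k,j}|=o_p(1)\,O_p(d^{\theta/2}\log^{1/2}d)$, and both are $o_p(\lambda)$ precisely because $\lambda\ge\log^\delta(d)\,d^{\theta/2}$ with $\delta>1/2$, even though Theorem~\ref{Th:pc} supplies no rate for $\epsilon_v$. With that substitution your off-support zeroing, on-support survival, and the final rate computation all go through, and the conclusion $|\langle\hat u_1^{\rm new},u_1\rangle|=1+o_p(d^{-\varsigma/2})$ matches the statement.
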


According to Theorems~\ref{corr:02} and~\ref{corr:03}, if
$\alpha-\eta-\theta> 1-\alpha$, then we can choose the thresholding parameter $\lambda=o(d^{\frac{\alpha-\eta-\varsigma}{2}})$ and make
the updated sparse loading vector $\hat{u}_1^{\mbox{\rm new}}$ in
(\ref{u_rSVD}) to be consistent with $u_1$ at every updating step.

\section{Strong Inconsistency}\label{sec:04}

We have shown that we can attain consistency using sparse PCA, when
the spike index $\alpha$ is greater than the sparsity index $\beta$.
This motivates the question of consistency using sparse PCA when the
spike index $\alpha$ is smaller than the sparsity index $\beta$. To
answer this question, we consider an oracle estimator which uses
the exact positions of zero entries of the population eigenvector
$u_1$. We will show that even this oracle estimator is strongly
inconsistent with the population eigenvector $u_1$ when the spike
index $\alpha$ is smaller than the sparsity index $\beta$. Compared
with this oracle sparse PCA, threshold methods can perform no better
because they also need to estimate location of the zero entries;
hence threshold methods will also be strongly inconsistent.

For Example~\ref{example:01}, the first $\lfloor d^\beta \rfloor$
entries of the population eigenvector $u_1$ are known to be the
non-zero entries. So we could first find a $\lfloor d^\beta
\rfloor$-dimensional estimator $\hat{u}^*_1$ through subspace PCA
for the $\lfloor d^\beta \rfloor$-dimensional subspace eigenvector
$u^*_1$ which is proportional to the following $\lfloor d^\beta
\rfloor$-dimensional vector,
\begin{eqnarray*}
\ddot{u}_1=( \overbrace{1,\ldots,1}^{\lfloor d^{\beta}\rfloor})^T.
\end{eqnarray*}
Then we get the oracle (OR) estimator for $u_1$ as,
\begin{eqnarray*}
\hat{u}^{\rm OR}_1=((\hat{u}^*_1)^T,
\overbrace{0,\ldots,0}^{d-\lfloor d^{\beta}\rfloor})^T.
\end{eqnarray*}
The oracle estimator $\hat{u}^{\rm OR}_1$ has the same sparsity as
the population eigenvector $u_1$. Furthermore, it is strongly
inconsistent with $u_1$ when $\alpha<\beta$.

To make this precise, we study the procedure  to generate the oracle
estimator for general single component models. Assume that the first
$\lfloor d^\beta \rfloor$ entries of the population eigenvector
$u_1$ are non-zero and the rest are all zero:
$u_1=(u_{1,1},\ldots,u_{d,1})^T$,where $u_{i,1}\neq 0,
i=1,\ldots, \lfloor d^\beta \rfloor$, otherwise $u_{i,1}=0$. Let
$X^*_j=(x_{1,j},\ldots,x_{\lfloor d^\beta \rfloor,j})^T\sim
N(0,\Sigma^*_{\lfloor d^\beta\rfloor})$, where $\Sigma^*_{\lfloor
d^\beta\rfloor}$ is the covariance matrix of $X^*_j$,
$j=1,\ldots,n$. Then, the eigen-decomposition of $\Sigma^*_{\lfloor
d^\beta\rfloor}$ is
\begin{eqnarray*}
\Sigma^*_{\lfloor d^\beta\rfloor}=U^*_{\lfloor d^\beta\rfloor}\Lambda^*_{\lfloor d^\beta\rfloor}(U^*_{\lfloor d^\beta\rfloor})^T,
\end{eqnarray*}
where $\Lambda^*_d$ is the diagonal matrix of eigenvalues
$\lambda^*_1\geq\lambda^*_2\geq\ldots\geq\lambda^*_{\lfloor
d^\beta\rfloor}$ and $U^*_{\lfloor d^\beta\rfloor}$ is the matrix of
the corresponding eigenvectors so that $U^*_{\lfloor
d^\beta\rfloor}=[u^*_1,\ldots,u^*_{\lfloor d^\beta\rfloor}]$. Since
the last $d-\lfloor d^\beta\rfloor$ entries of the first population
eigenvector $u_1$ equal  zero, it follows that the first eigenvector
$u^*_1$ of $\Sigma^*_{\lfloor d^\beta \rfloor}$  is formed by the
non-zero entries of the population eigenvector $u_1$, i.e.
$u^*_1=(u_{1,1},\ldots,u_{\lfloor d^\beta \rfloor,1})^T$. So we
have
\begin{eqnarray*}
u_1=((u^*_1)^T, \overbrace{0,\ldots,0}^{d-\lfloor d^{\beta}\rfloor})^T.
\label{subspace:population}
\end{eqnarray*}

Consider the following data matrix $X^*_{\lfloor d^\beta
\rfloor}=[X^*_1,\ldots,X^*_n]$, and denote the sample covariance
matrix by $\hat{\Sigma}^*_{\lfloor
d^\beta\rfloor}=n^{-1}X^*_{\lfloor d^\beta\rfloor }X_{\lfloor
d^\beta \rfloor}^{T}$. Then, the sample covariance matrix
$\hat{\Sigma}^*_{\lfloor d^\beta\rfloor}$ can be similarly
decomposed as
\begin{eqnarray*}
\hat{\Sigma}^*_{\lfloor d^\beta \rfloor}=\hat{U}^*_{\lfloor d^\beta \rfloor}
\hat{\Lambda}^*_{\lfloor d^\beta \rfloor}(\hat{U}^*_{\lfloor d^\beta \rfloor})^T,
\end{eqnarray*}
where $\hat{\Lambda}^*_{\lfloor d^\beta \rfloor}$ is the diagonal
matrix of the sample eigenvalues
$\hat{\lambda}^*_1\geq\hat{\lambda}^*_2\geq\ldots\geq\hat{\lambda}^*_{\lfloor
d^\beta \rfloor}$ and $\hat{U}^*_{\lfloor d^\beta \rfloor}$ is the
matrix of the corresponding sample eigenvectors so that
$\hat{U}^*_{\lfloor d^\beta
\rfloor}=[\hat{u}^*_1,\ldots,\hat{u}^*_d]$. Then, we define the
oracle (OR) estimator as
\begin{eqnarray}
\hat{u}^{\rm OR}_1=((\hat{u}^*_1)^T,
\overbrace{0,\ldots,0}^{d-\lfloor d^{\beta}\rfloor})^T.
\label{subspace}
\end{eqnarray}

The following theorem states the main result regarding strong
inconsistency.
\begin{theorem}  Assume that
$X_1,\ldots,X_n$ are random samples from a
$d$-dimensional normal distribution $ N(0, \Sigma_d) $. The first
population eigenvector is $u_1=(u_{1,1},\ldots,u_{d,1})^T$,where
$u_{i,1}\neq 0, i=1,\ldots, \lfloor d^\beta \rfloor$, otherwise
$u_{i,1}=0$. If the following
conditions are satisfied:

(a) $\lambda_1\sim d^\alpha $, $\lambda_2 \sim d^\theta $, $\lambda_d\sim 1 $ and $\sum_{i=2}^d\lambda_i\sim d $,
where $\theta \in [0, \frac{\beta}{2})$,

(b) $\alpha<\beta$,

then the oracle estimator $\hat{u}^{\rm OR}_1$ in (\ref{subspace})
is strongly inconsistent with $u_1$.\label{inconsistent}
\end{theorem}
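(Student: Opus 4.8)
The plan is to reduce the statement to a subspace PCA problem of growing dimension $p:=\lfloor d^\beta\rfloor$ and then to run, for that problem, the dual-covariance-matrix argument of Jung and Marron that is already used in Section~\ref{sec:02}. First I would observe that, by the very construction in~(\ref{subspace}), $\hat u_1^{\rm OR}=((\hat u_1^*)^T,0,\ldots,0)^T$ and $u_1=((u_1^*)^T,0,\ldots,0)^T$ are unit vectors satisfying $\langle\hat u_1^{\rm OR},u_1\rangle=\langle\hat u_1^*,u_1^*\rangle$, so it suffices to show that the leading empirical eigenvector $\hat u_1^*$ is strongly inconsistent with the population eigenvector $u_1^*$ in the $p$-dimensional problem with data matrix $X^*_{\lfloor d^\beta\rfloor}$ and columns $X_j^*\sim N(0,\Sigma^*_{\lfloor d^\beta\rfloor})$; note $p\to\infty$ since $\beta>\alpha\geq0$.

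Second, I would pin down the spectrum of $\Sigma^*_{\lfloor d^\beta\rfloor}$. Because $u_1$ vanishes outside its first $p$ coordinates, restricting the identity $\Sigma_d u_1=\lambda_1 u_1$ to those coordinates gives $\Sigma^*_{\lfloor d^\beta\rfloor}u_1^*=\lambda_1 u_1^*$; since the largest eigenvalue of a principal submatrix never exceeds that of the full matrix, in fact $\lambda_1^*=\lambda_1\sim d^\alpha$ exactly. Cauchy interlacing gives $\lambda_2^*\leq\lambda_2\sim d^\theta$, while $(\Sigma_d)_{ii}\geq\lambda_d\geq c>0$ for every $i$ forces $\mbox{tr}(\Sigma^*_{\lfloor d^\beta\rfloor})=\sum_{i=1}^{p}(\Sigma_d)_{ii}\geq cp\sim d^\beta$, so, using $\alpha<\beta$ to absorb $\lambda_1$,
\begin{equation*}
\sum_{i=2}^{p}\lambda_i^*=\mbox{tr}(\Sigma^*_{\lfloor d^\beta\rfloor})-\lambda_1\geq\tfrac{c}{2}\,d^\beta\quad\text{for all large }d.
\end{equation*}
Combined with $\theta<\beta$ (which follows from $\theta<\beta/2$), this yields the two facts that drive the whole argument,
\begin{equation*}
\frac{\lambda_1}{\sum_{i=2}^{p}\lambda_i^*}\longrightarrow0
\qquad\text{and}\qquad
\frac{\sum_{i=2}^{p}(\lambda_i^*)^2}{\big(\sum_{i=2}^{p}\lambda_i^*\big)^2}\leq\frac{\lambda_2^*}{\sum_{i=2}^{p}\lambda_i^*}\longrightarrow0,
\end{equation*}
the second being precisely the $\varepsilon$-condition for $\Sigma^*_{\lfloor d^\beta\rfloor}$ (using $\lambda_i^*\leq\lambda_2^*$ for $i\geq2$).

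Third, I would carry out the dual-matrix computation as in Section~\ref{sec:02}. Writing $X_j^*=z_{1,j}u_1^*+H_j^*$ with $z_{1,j}=\sqrt{\lambda_1}\,w_{1,j}$, $W_1=(w_{1,1},\ldots,w_{1,n})^T\sim N(0,I_n)$, and $H_j^*=\sum_{i\geq2}z_{i,j}^*u_i^*$ where $z_{i,j}^*=(X_j^*)^Tu_i^*\sim N(0,\lambda_i^*)$, the dual covariance matrix factors as $S^*_p=\tfrac1n(X^*)^TX^*=\tfrac{\lambda_1}{n}W_1W_1^T+\mathcal H^*$, where $\mathcal H^*_{jk}=\tfrac1n\sum_{i=2}^{p}\lambda_i^*w_{i,j}^*w_{i,k}^*$ has mean $\tau_p I_n$ with $\tau_p=\tfrac1n\sum_{i=2}^{p}\lambda_i^*$, and, for the fixed finite range $1\leq j,k\leq n$, $\mbox{Var}(\mathcal H^*_{jk}/\tau_p)\leq 2\sum_{i\geq2}(\lambda_i^*)^2/(\sum_{i\geq2}\lambda_i^*)^2\to0$. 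Chebyshev's inequality then gives $\tau_p^{-1}\mathcal H^*\xrightarrow{p}I_n$, so in particular $\hat\lambda_1^*\geq\lambda_{\max}(\mathcal H^*)\geq\tfrac1n\mbox{tr}(\mathcal H^*)=(1+o_p(1))\tau_p$. Using $\hat u_1^*=X^*\tilde v_1^*/\|X^*\tilde v_1^*\|$ (with $\tilde v_1^*$ the leading unit eigenvector of $S^*_p$), the identity $(u_1^*)^TX^*=\sqrt{\lambda_1}\,W_1^T$, and $\|X^*\tilde v_1^*\|^2=n\hat\lambda_1^*$, one obtains
\begin{equation*}
\langle\hat u_1^*,u_1^*\rangle^2=\frac{\lambda_1(W_1^T\tilde v_1^*)^2}{n\,\hat\lambda_1^*}\leq\frac{\lambda_1\,\|W_1\|^2}{n\,\hat\lambda_1^*}\leq(1+o_p(1))\,\|W_1\|^2\,\frac{\lambda_1}{\sum_{i=2}^{p}\lambda_i^*}\xrightarrow{p}0,
\end{equation*}
because $\|W_1\|^2=O_p(1)$. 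Therefore $\mbox{Angle}(\hat u_1^{\rm OR},u_1)\xrightarrow{p}\pi/2$.

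I expect the main obstacle to be the rigorous proof that $\tau_p^{-1}\mathcal H^*\xrightarrow{p}I_n$ — equivalently, the trace concentration of $\mathcal H^*$ behind the lower bound $\hat\lambda_1^*\geq(1+o_p(1))\tau_p$: this requires the second-moment bookkeeping for the quadratic forms $\mathcal H^*_{jk}$ and a careful use of the subspace $\varepsilon$-condition, and it is exactly here that the interlacing bound $\lambda_2^*\leq\lambda_2$ and the hypothesis $\theta<\beta$ get consumed. A secondary point to be careful with is that the limit $I_n$ has no isolated top eigenvalue, so $\tilde v_1^*$ itself cannot be identified; the crude bound $(W_1^T\tilde v_1^*)^2\leq\|W_1\|^2$ is what keeps that harmless.
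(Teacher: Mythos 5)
Your proposal is correct, and its first half mirrors the paper's argument: the paper likewise reduces to the $\lfloor d^\beta\rfloor$-dimensional subspace problem, notes $\lambda_1^*=\lambda_1$ and $\lambda_d\leq\lambda_i^*\leq\lambda_2$ for $i\geq2$, and verifies exactly your two limits, $\lambda_1^*/\sum_{i\geq2}\lambda_i^*\to0$ and the subspace sphericity ($\varepsilon$-type) condition. Where you differ is the final step: the paper stops there and cites the strong-inconsistency theorem of Jung and Marron (2009) \cite{jung2009pca} for the subspace PCA estimator $\hat u_1^*$, whereas you prove that step from scratch via the dual decomposition $S_p^*=\tfrac{\lambda_1}{n}W_1W_1^T+\mathcal{H}^*$, entrywise Chebyshev concentration $\tau_p^{-1}\mathcal{H}^*\xrightarrow{p}I_n$, the trace lower bound $\hat\lambda_1^*\geq(1+o_p(1))\tau_p$, and the Cauchy--Schwarz bound $(W_1^T\tilde v_1^*)^2\leq\|W_1\|^2$ (which, as you note, neatly sidesteps the non-identifiability of $\tilde v_1^*$ in the isotropic limit). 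This buys a self-contained proof, and your sharper verification of the sphericity condition via $\sum_{i\geq2}(\lambda_i^*)^2\leq\lambda_2^*\sum_{i\geq2}\lambda_i^*$ in fact only uses $\theta<\beta$, whereas the paper's cruder bound $\lfloor d^\beta\rfloor\lambda_2^2/(\lfloor d^\beta\rfloor\lambda_d)^2$ is what forces the stated hypothesis $\theta<\beta/2$; so your route would even allow a slight weakening of condition (a). Two minor points you use implicitly and should state: $\hat\lambda_1^*=\lambda_{\max}(S_p^*)$ because the primal and dual sample covariance matrices share their nonzero spectrum, and your trace bound $\sum_{i\geq2}\lambda_i^*\geq\mathrm{tr}(\Sigma^*_{\lfloor d^\beta\rfloor})-\lambda_1\gtrsim d^\beta$ plays the role of the paper's interlacing bound $\lambda_i^*\geq\lambda_d$; both are valid.
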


\section{Simulations for sparse PCA}\label{sec:05}
Here, we will perform simulation studies to illustrate the performance of the ST method and the RSPCA with the hard thresholding penalty.
Let the sample size $n=25$ and the dimension $d=10, 000$.  To generate the data matrix $X_{(d)}$, we first need to construct the population covariance matrix for $X_{(d)}$ that approximates the conditions of Theorems~\ref{Th:02} and~\ref{Th:03} when the spike index $\alpha$ is greater
than the sparsity index $\beta$.

For the population covariance matrix, we consider the motivating model in Example~\ref{example:01} for the first population eigenvector and the eigenvalues, where the first eigenvalue $\lambda_1=d^\alpha$ and the rest equal one, i.e.
$\lambda_i=1$, $i\geq2$. For the additional population
 eigenvectors $u_i$, $2\leq i\leq \lfloor d^{\beta}\rfloor$, let the last $d-\lfloor d^{\beta}\rfloor$
 entries of these eigenvectors be zero. In particular, let the
 eigenvectors $u_i$, $2\leq i\leq \lfloor d^{\beta}\rfloor$, be proportional to
 \begin{equation*}
\dot{u}_i=( \overbrace{1,\ldots,1}^{i-1}, -i+1, 0,\ldots,0)^T.
\end{equation*}
After normalizing $\dot{u}_i$, we get the $i$-th eigenvector
$u_i={\dot{u}_i}/{\|\dot{u}_i\|}$. For $i > \lfloor
d^{\beta}\rfloor$, let the $i$-th eigenvector have just one non-zero
entry in the $i$-th position such that $u_i=(
\overbrace{0,\ldots,0}^{i-1}, 1, 0,\ldots,0)^T$.

Then the data matrix is generated as
\begin{equation*}
X_{(d)}= d^{\frac{\alpha}{2}} u_1 z^T_1+
\sum_{i=2}^d u_i z^T_i,
\end{equation*}
where the $z_i$ are generated from the $n$-dimensional standard normal distribution $N(0, I_n)$.

We select twenty spike and sparsity pairs $(\alpha, \beta)$ that have spike index $\alpha= \{0.2, 0.4, 0.6, 0.8\}$
and sparsity index $\beta=\{0, 0.1, 0.3, 0.5, 0.7\}$, which are shown in Figure~\ref{fig:01}. We perform the simulation
for all twenty spike and sparsity pairs. For each spike and sparsity pair $(\alpha, \beta)$, we generate 100 realizations of the data matrix $X_{(d)}$. Results for three representative pairs are reported below, and interesting observations are discussed. Additional simulation results can be found at~\cite{shen2011online}.\\

First of all, the plots in Figure~\ref{fig2} summarize the results for the
spike and sparsity pair $(\alpha, \beta)=(0.6, 0.1)$,
corresponding to one of the square dots in the white (consistent) triangular area of Figure~\ref{fig:01}.
For each replication of the data matrix $X_{(d)}$ and a range of the thresholding parameter $\lambda$,
we obtained the ST estimator $\hat{u}^{\rm ST}_1$ (Section~\ref{sec:02})
and the RSPCA estimator $\hat{u}_1$ (Section~\ref{sec:04}). Then we calculate the angle between the estimates $\hat{u}^{\rm ST}_1$ (or $\hat{u}_1$)
and the first population eigenvector $u_1$ through (\ref{Consistency}).
Plotting this angle as a function of the thresholding parameter $\lambda$ gives the curve in Panel (A) of Figure~\ref{fig2}.
Since ST and RSPCA have very similar performance in this case, we just show the RSPCA plots in Figure~\ref{fig2}. The 100 simulation realizations of the data matrices $X_{(d)}$ generate the one-hundred curves in the panel. We rescale the thresholding parameter $\lambda$ as  $\mbox{log}_{10}(\lambda+10^{-5})$,
to help reveal clearly the tendency of the angle curves as the thresholding parameter increases.

\begin{figure}[ht!H]
\vspace{-.5cm}
     \begin{center}
     \advance\leftskip-1.75cm
     \includegraphics[width=16.05cm]{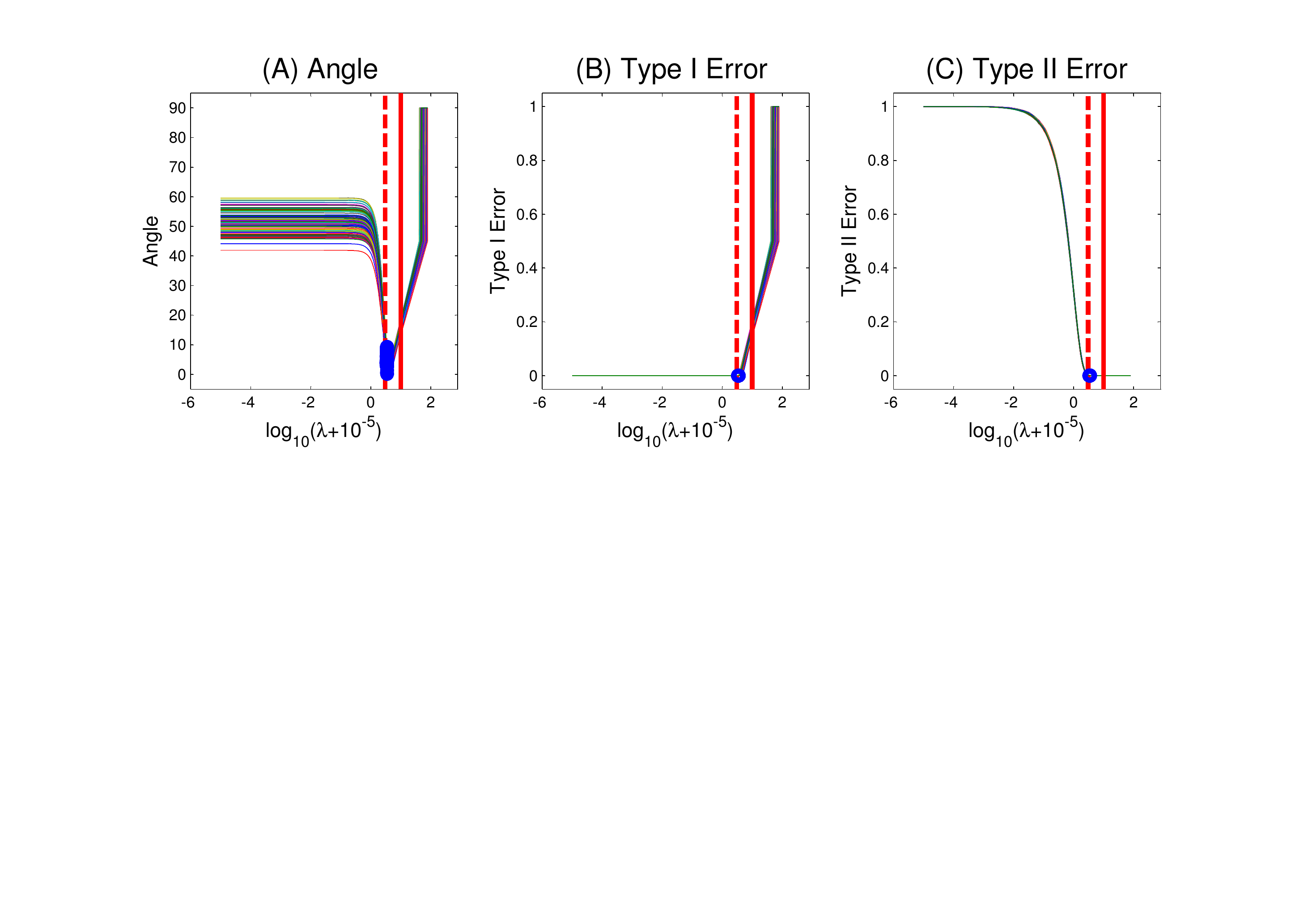} \vspace{-6.5cm}
     \end{center}
      \caption{Performance summary of RSPCA for  spike index $\alpha= 0.6 $ and
      sparsity index $\beta=0.1$ where consistency is expected. Panel (A) shows angle
      to the first population eigenvector as a function of thresholding parameter $\lambda$. Panel (B)
      and (C) are Type I Error  and Type II Error as a function of $\lambda$. The vertical dashed and solid lines are the left  and right
      bounds of the range of the thresholding  parameter, which leads to the consistency of RSPCA. These show very good performance of
      RSPCA within the indicated range, which empirically confirms our asymptotic calculation. The circles indicate values at the BIC choice of $\lambda$.}
       \label{fig2}
\vspace{-.25cm}
     \end{figure}

In these angle plots,
the angles with $\lambda=0$ (essentially the left edge of each plot) correspond to the ones obtained by the conventional PCA. Note that these angles are all over 40 degrees which confirms the results of Jung and Marron (2009)~\cite{jung2009pca} that
  when the spike index $\alpha<1$, the conventional PCA can not generate a consistent estimator for the population eigenvector $u_1$. As $\lambda$ increases, the angle remains stable for a while, then decreases to almost 0 degree, before eventually starting to increase to 90 degrees. The dashed and solid vertical lines  in the angle plots indicate the range of the thresholding parameter that gives a consistent estimator for $u_1$, as stated in Theorems~\ref{Th:02} and~\ref{Th:04}. These plots suggest that RSPCA does improve over PCA and the indicated thresholding range is very reasonable in this case, which in turn empirically validates the
  asymptotic results of the theorems. For each realization of the data, as
   the thresholding parameter increases, all entries will be thresholded out, i.e. become zero, so
    the sparse PCA estimator eventually becomes a  $d$-dimensional zero vector. Hence the angles go to 90 degrees when the thresholding parameter is large enough.

Zou, Hastie and Tibshirani (2007)~\cite{zou2007degrees} suggest the use of the Bayesian Information Criterion (BIC)~\cite{schwarz1978estimating} to select the number of the non-zero coefficients for a lasso regression. Lee et al. (2010)~\cite{lee-biclustering} apply this idea to the sparse PCA context. According to~\cite{lee-biclustering}, for a fixed $\tilde{v}_1$, minimization of (\ref{For:01}) with respect to $\tilde{u}_1$ is equivalent to minimization of the following penalized regression criterion with respect to
$\tilde{u}_1$:
\begin{eqnarray}
\|X_{(d)}-\tilde{u}_1\tilde{v}_1^T\|^2_F+P_{\lambda}(\tilde{u}_1)=
\|Y-(I_d\bigotimes
\tilde{v}_1)\tilde{u}_1\|^2+P_{\lambda}(\tilde{u}_1),\label{penalized
regression}
\end{eqnarray}
where $Y=(X^1, \ldots, X^d)^T$, with $X^i$ being the $i$-th row of $X_{(d)}$, and $\bigotimes$ is the Kronecker product.
Following their suggestion, for
the above penalized regression (\ref{penalized regression}) with a fixed $\tilde{v}_1$, we define
\begin{eqnarray}
\mbox{BIC}(\lambda)=\frac{\|Y-\hat{Y}\|^2}{nd
\hat{\sigma}^2}+\frac{\log (nd)}{nd} \hat{df}(\lambda), \label{BIC}
\end{eqnarray}
where $\hat{\sigma}^2$ is the ordinary-least squares estimate of the error variance, and $\hat{df}(\lambda)$ is the degree of sparsity for the thresholding parameter $\lambda$, i.e. the number of non-zero entries in $\tilde{u}_1$. For every step of the iterative procedure of RSPCA,
we can use BIC (\ref{BIC}) to select the thresholding parameter and then obtain the corresponding sparse PC direction, until the algorithm converges.



For every  angle curve in the angle plots of Figure~\ref{fig2}, we use a blue circle to indicate the thresholding parameter $\lambda$ that is selected by BIC during the last iterative step of RSPCA, and the corresponding angle. In the current $\alpha=0.6$, $\beta=0.1$ context, BIC works well, and all the BIC-selected $\lambda$ values are very close, so the 100 circles are essentially over plotted on each other. BIC also
  works well for the other spike and sparsity pairs $(\alpha, \beta)$ we considered where $\alpha>\beta$, which are shown in~\cite{shen2011online}.

Another measure of the success of a sparse estimator is in terms of
 which entries are zeroed. Type I Error is  the proportion of non-zero entries in $u_1$ that are mistakenly estimated as zero. Type II Error
  is the proportion of zero entries in $u_1$ that are mistakenly estimated as non-zero.
Similar to the angle, Type I Error  (Type II Error) is also a function
of the thresholding parameter. For each replication of the data matrix $X_{(d)}$, we calculate a Type I Error (Type II Error) curve.
Thus, there are one hundred such curves in Panels (B) and (C) of Figure~\ref{fig2}, respectively. The dashed and solid lines in these two panels are the same as those in Panel (A). Note that for all the thresholding parameters in the range indicated by the lines, the errors are
very small, which is again consistent with the asymptotic results of Theorems~\ref{Th:02} and~\ref{Th:04}. Again, the circles in these plots are selected by BIC  and they have the same horizonal thresholding parameter, as in the angle plots.
Thus, BIC works well here. BIC also generates similarly very small errors for the other spike and sparsity pairs
$(\alpha, \beta)$ in Figure~\ref{fig:01} that satisfy $\alpha>\beta$.\\

Next we will compare the relative performance among PCA, ST and  RSPCA.
In almost all cases, ST and RSPCA give better results than PCA
and in some extreme cases, the three methods have similar poor performance.
Although in most cases both ST and RSPCA have similar performance,
however, there are some cases (for example when $\alpha =0.4 $ and $\beta=0.3$),
where RSPCA performs better than ST. For every replication of the data matrix $X_{(d)}$, we use BIC to
select the thresholding parameter, and then calculate the ST estimator $\hat{u}^{\rm ST}_1$
and the RSPCA estimator $\hat{u}_1$. After that, we calculate the angle, Type I Error and Type II Error for the three estimators, as well as the difference
between ST and RSPCA (ST minus RSPCA). For each measure, the 100 values are summarized using box plots
in Figure~\ref{fig:03}.

 \begin{figure}[ht!H]
 \vspace{-.5cm}
     \begin{center}
     \advance\leftskip-1.95cm
     \includegraphics[width=16.35cm]{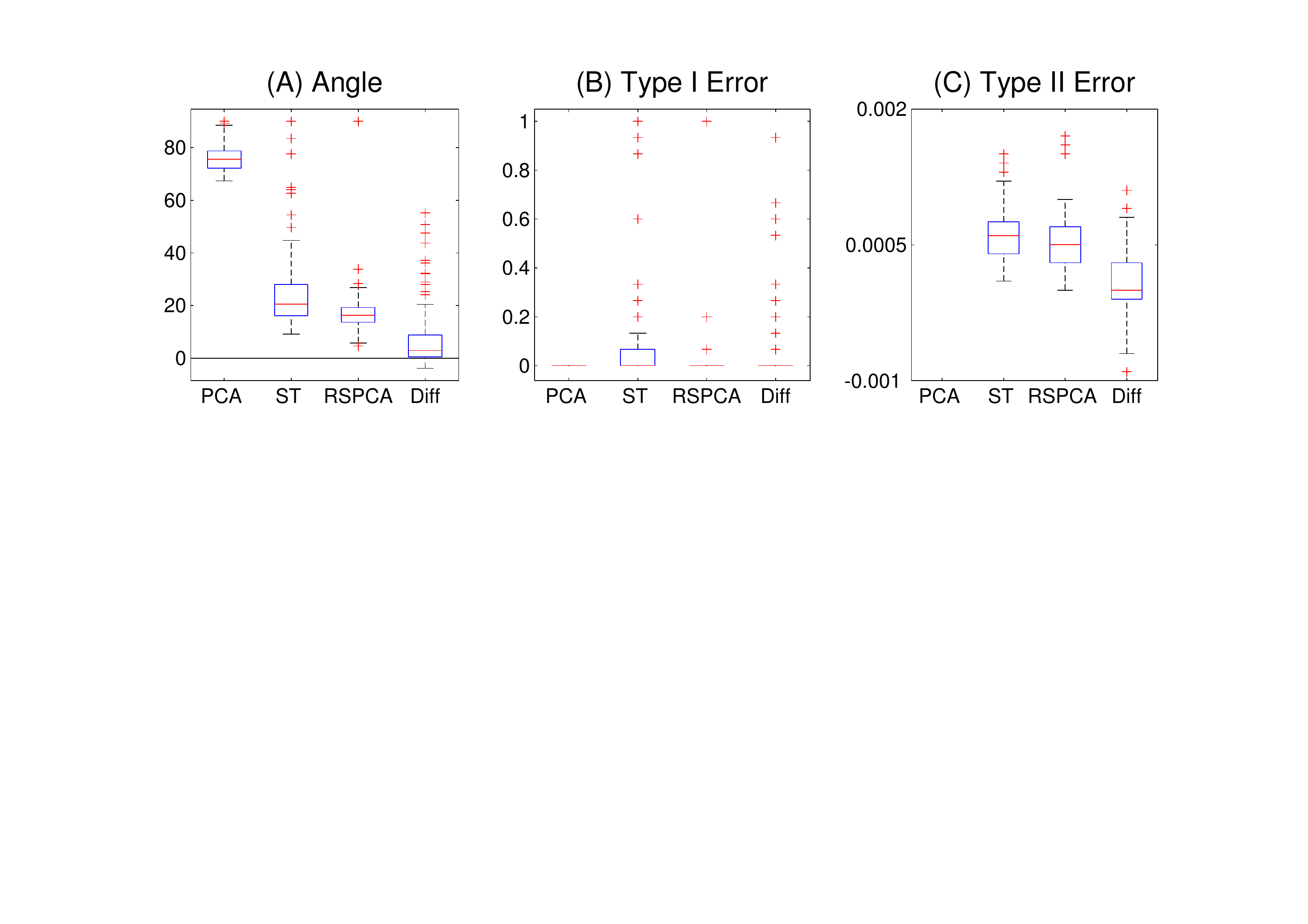} \vspace{-7cm}
     \end{center}
     \caption{Comparison of PCA, ST and RSPCA for spike index $\alpha= 0.4 $ and
     sparsity index $\beta=0.3$. Panels (A), (B) and (C) respectively contain four angle, Type I Error and Type II Error
      box plots: (i) conventional PCA; (ii) and (iii) ST and RSPCA with BIC; (iv) the difference between ST and RSPCA. In Panel (A), angles for conventional PCA are generally
      larger than ST and RSPCA which
       indicates the worse performance of contentional PCA. In addition, the angles and Type I Errors for ST are larger than RSPCA and their difference box plots furthermore confirm this point, which indicates the better performance of RSPCA in this case. Type II Errors for ST and RSPCA are almost the same.}
     \label{fig:03}
     \vspace{-.25cm}
\end{figure}

Panel (A) of Figure~\ref{fig:03} shows the box plots of the angles between the first population eigenvector $u_1$ and the estimates
obtained by PCA, ST and RSPCA, as well as the differences between ST and RSPCA. Note that the PCA angles are large, compared with
ST and RSPCA, indicating the worse performance of PCA.  The angle of ST seems larger than RSPCA. For a deeper view
of this comparison, the pairwise differences are studied in the fourth box plot of the panel. The angle differences are almost
always positive, with some differences bigger than 50 degrees, which suggests that RSPCA has a better performance than ST.
Similar conclusions can be made from the box plots of the errors, in Panels (B) and (C) of Figure~\ref{fig:03}.
The box plot for PCA is not shown in Panel (C) because the corresponding Type II Error almost always equals one, which
is far outside the shown range of interest.\\

\begin{figure}[ht!H]
\vspace{-.5cm}
     \begin{center}
     \advance\leftskip-1.75cm
     \includegraphics[width=16.05cm]{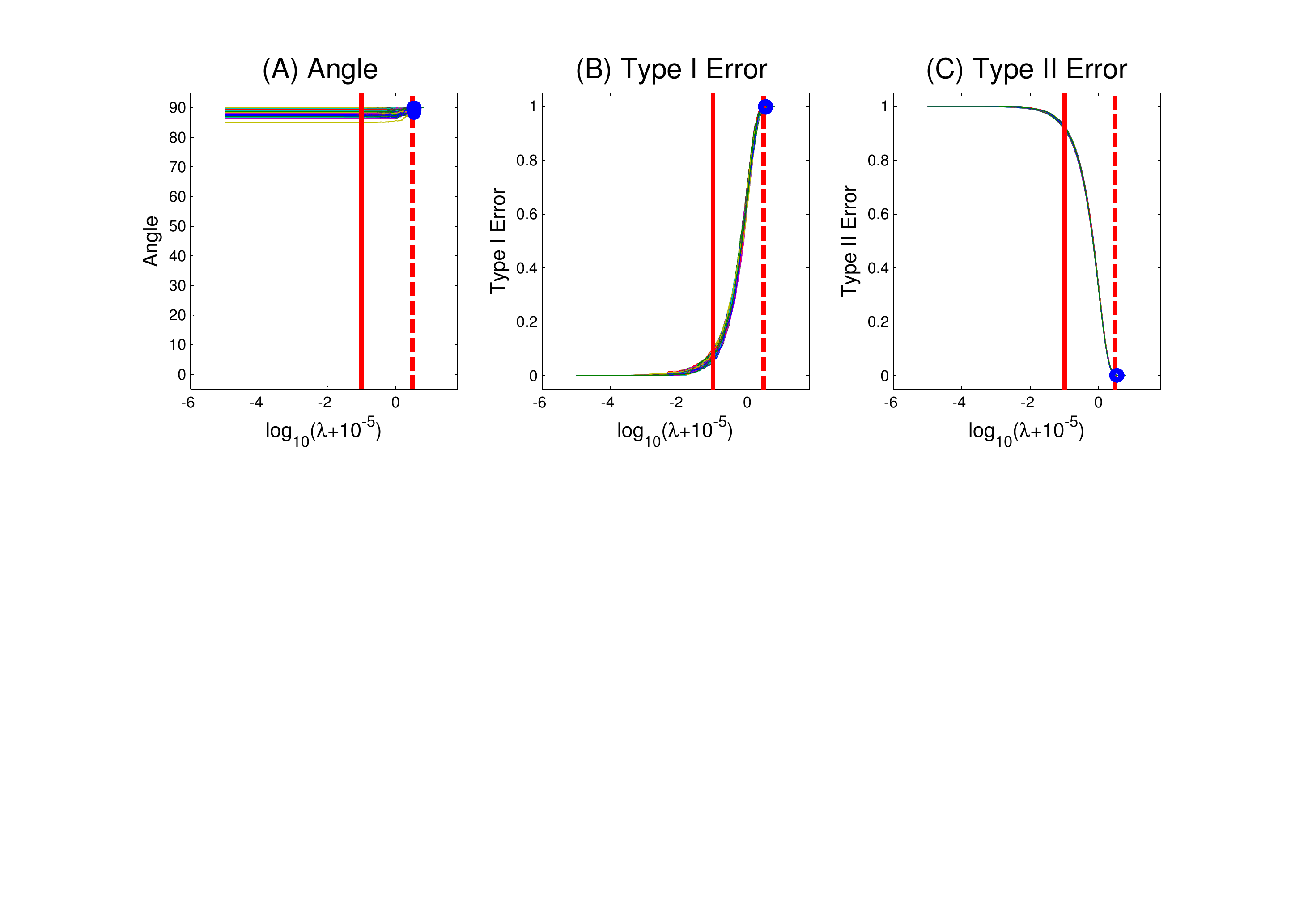} \vspace{-6.5cm}
     \end{center}
      \caption{Performance summary of RSPCA for spike index $\alpha= 0.2 $ and
      sparsity index $\beta=0.7$ where strong inconsistency is expected. Panel (A) shows the angle curves, between the RSPCA estimator and
      the first population eigenvector. Same format as Figure~\ref{fig2}. Since the spike index $\alpha=0.2$ is smaller
       than the sparsity index $\beta=0.7$, it follows that the right bound (solid line) is smaller than the left bound (dashed line).
       Thus the theorems do not give a meaningful range of the thresholding parameter.
        As expected, performance is very poor for any thresholding parameter $\lambda$.}
\vspace{-.25cm}
\label{fig:04}
     \end{figure}

Finally, Theorems~\ref{Th:02} and~\ref{Th:04} consider the condition that the spike index $\alpha$ is greater
than the sparsity index $\beta$. When $\alpha$ is smaller than $\beta$,
neither ST nor RSPCA is expected to give consistent estimation for the first population eigenvector $u_1$,
as discussed in Section~\ref{sec:04}. For the spike and sparsity pairs $(\alpha, \beta)$ such that $\alpha < \beta$, the simulation results also confirm this point. Here, we display the simulation plots for the spike and sparsity pair $(\alpha, \beta)=(0.2, 0.7)$  in Figure~\ref{fig:04}
as a representative of such simulations. Since ST and RSPCA have very similar performance here, we just
show the simulation results for RSPCA.
Similar to Figure~\ref{fig2}, the circles in Figure~\ref{fig:04} correspond to the thresholding parameter selected by BIC.
From the angle plots, we can see that the angles, selected by BIC, are close to 90 degrees, which suggests the failure of BIC in this case. In fact, all the angle curves are above 80 degrees. Thus, neither
ST nor RSPCA generates a reasonable sparse estimator. This is a common phenomenon when the spike index $\alpha$ is smaller than the sparsity index $\beta$.
It is consistent with the theoretical investigation in Section~\ref{sec:04}.

Furthermore, the corresponding Type I Error, generated by ST or RSPCA with BIC, is close to one. This further confirms that
BIC doesn't work when the spike index $\alpha$ is smaller than the sparsity index $\beta$. ST and RSPCA
 with $\lambda=0$ is just the conventional PCA, and typically will not generate a sparse estimator.
This entails that the Type I Error and Type II Error, corresponding to $\lambda=0$, respectively equals zero and
one. As the thresholding parameter increases, more and more
entries are thresholded out; hence Type I Error  increases to one and Type II Error  decreases to zero.

\section{Non-Gaussian Variations}\label{sec:06}\vspace{-.5cm}
In this paper, we consider HDLSS data analysis contexts, using the high dimensional normal
distribution. In the future, we hope to extend our theorems to more general distributions. However,
this will be challenging because  sparse PCA
methods may not work in some extreme cases. This point is illustrated by the following interesting example.\\

\begin{example} Let $\alpha\in(0,1)$ and  $X=(x_1,\ldots, x_d)^T$, where $\{x_i, i=1, \ldots, d\}$ are
independent discrete random variables with the following discrete probability distributions:
$$x_1=\begin{cases}
d^{\frac{\alpha}{2}}, \quad & {\rm with\; probability}\;\;\frac{1}{2},\\
-d^{\frac{\alpha}{2}}, \quad & {\rm with\; probability}\;\;\frac{1}{2};
\end{cases}
$$
and for $i=2
\ldots,d$
$$x_i=\begin{cases}
d^{\frac{\alpha+1}{4}}, \quad & {\rm with\; probability}\;\;d^{-\frac{\alpha+1}{2}},\\
-d^{\frac{\alpha+1}{4}}, \quad & {\rm with\; probability}\;\;d^{-\frac{\alpha+1}{2}},\\
0, \quad & {\rm with\; probability}\;\;1-2d^{-\frac{\alpha+1}{2}}.
\end{cases}
$$

Then $X$ has mean $0$ and variance-covariance $\Sigma_d$ with
\begin{eqnarray*}
\Sigma_d=d^\alpha u_1u_1^T+ \sum_{k=2}^d
u_ku_k^T,
\end{eqnarray*}
where $u_1=(1,0,\ldots,0)^T$.
\end{example}

Suppose that we only have sample  size $n=1$, i.e. $X_1=(x_{i,1},
\ldots, x_{d,1})^T$, then the first empirical eigenvector
\begin{eqnarray*}
\hat{u}_1=\left(\hat{u}_{1,1},\ldots,\hat{u}_{d,1}\right)^T=\frac{1}{\sqrt{\sum_{i=1}^d
x^2_{i,1} }} ( x_{i,1}, \ldots,x_{i,d})^T.
\end{eqnarray*}
Under this condition, we have
\begin{eqnarray*}
\nonumber P\left({\mbox{argmax}}_i |\hat{u}_{i,d}|=1\right)&=&P\left(|x_{1,1}|>
\mbox{max}\left\{|x_{2,1}|,\ldots,|x_{d,1}|\right\}\right)\\ \nonumber
&=&P\left(x_{2,1}=0, \ldots, x_{d,1}=0\right)\\ \nonumber
&=&\left(1-2d^{-\frac{\alpha+1}{2}}\right)^{d-1}\\
&\longrightarrow& 0 \; as \; d\rightarrow\infty.
\end{eqnarray*}
In particular, the absolute value of the first entry of the empirical eigenvector can not be greater than the others
with probability 1, so we can not always
threshold out the right entries which results in the failure of the simple thresholding method. Similar considerations
apply to other sparse  PCA methods.

 \section{Proofs}\label{sec:07}

 \subsection{Proofs of Theorem~\ref{Th:02} and Theorem~\ref{corr:01}}\label{subsec:71}

In order to prove Theorem~\ref{Th:02} and Theorem~\ref{corr:01}, we need
the dependent extreme value results from Leadbetter, Lindgren and Rootzen (1983)~\cite{leadbetter1983extremes}, in particular their Lemma 6.1.1 and Theorem 6.1.3.

An immediate consequence of those results is the following proposition.
\begin{proposition} Suppose that the
standard normal sequence $\{\xi_i, i=1,\dots, \lfloor d^\beta \rfloor\}$ satisfies the mixing condition (\ref{mixing}).
Let the positive constants $\{c_i\}$ be such that $\sum_{i=1}^{\lfloor d^\beta \rfloor}
(1-\Phi(c_i))$ is bounded and such that $C_{\lfloor d^\beta \rfloor}=\mbox{min}_{1\leq i\leq {\lfloor d^\beta \rfloor}}$
$c_i\geq c(log  (\lfloor d^\beta \rfloor))^{\frac{1}{2}}$ for  some $c>0$.

Then the following holds:
\begin{equation*}
P\left[\bigcap_{i=1}^{\lfloor d^\beta \rfloor}\left\{\xi_i\leq c_i \right\}\right]-\prod_{i=1}^{\lfloor d^\beta \rfloor}
\Phi(c_i)\longrightarrow 0, as\; d\rightarrow\infty,
\end{equation*}
where $\Phi$ is the standard normal distribution function. Furthermore, if for some $\jmath\geq0$, we have
\begin{equation*}
\sum_{i=1}^{\lfloor d^\beta \rfloor} (1-\Phi(c_i))\longrightarrow \jmath, as\;
d\rightarrow\infty,
\end{equation*}
then
\begin{equation*}
P\left[\bigcap_{i=1}^{\lfloor d^\beta \rfloor} \left\{\xi_i\leq c_i\right\}\right]\longrightarrow
e^{-\jmath}, as\; d\rightarrow\infty.
\end{equation*}\label{pro:01}
\end{proposition}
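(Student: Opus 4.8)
The plan is to reduce the statement to Berman's normal comparison lemma (Lemma 6.1.1 of \cite{leadbetter1983extremes}) together with the weak-dependence estimate underlying Theorem 6.1.3 of \cite{leadbetter1983extremes}, and then to pass from the product $\prod_i \Phi(c_i)$ to $e^{-\jmath}$ by an elementary expansion. Throughout write $N=\lfloor d^{\beta}\rfloor$, $p_i=1-\Phi(c_i)$, and $r_{ij}=\mathrm{Cov}(\xi_i,\xi_j)$; by the mixing condition \eqref{mixing} (with $m_{kk}=1$) one has $|r_{ij}|\le\rho_{|i-j|}$, and we may assume $\delta:=\sup_{t\ge1}\rho_t<1$.

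First I would apply the comparison lemma to the Gaussian vector $(\xi_1,\dots,\xi_N)$, comparing its covariance matrix $(r_{ij})$ with the $N\times N$ identity. This yields
\begin{equation*}
\Bigl|\,P\Bigl[\bigcap_{i=1}^{N}\{\xi_i\le c_i\}\Bigr]-\prod_{i=1}^{N}\Phi(c_i)\,\Bigr|
\ \le\ \frac{1}{2\pi}\sum_{1\le i<j\le N}|r_{ij}|\,(1-r_{ij}^{2})^{-1/2}\exp\!\Bigl(-\frac{c_i^{2}+c_j^{2}}{2(1+|r_{ij}|)}\Bigr),
\end{equation*}
and since $(1-r_{ij}^{2})^{-1/2}\le(1-\delta^{2})^{-1/2}$, the first assertion follows once the double sum on the right is shown to tend to $0$ as $d\to\infty$.

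To bound that double sum I would reproduce the argument in the proof of Theorem 6.1.3 of \cite{leadbetter1983extremes}, adapted to the level-dependent cut-offs $c_i$: group the pairs $(i,j)$ by lag $t=|i-j|$ and split at a slowly growing value $t_0=t_0(N)$. Since $C_N\to\infty$ (so $\max_i p_i\le 1-\Phi(C_N)\to0$) one has $\exp\!\bigl(-(c_i^{2}+c_j^{2})/2\bigr)\le K_0\,\log\!\bigl(1/(p_ip_j)\bigr)\,p_ip_j$ for all large $d$. For short lags $t\le t_0$, bounding $|r_{ij}|\le\delta$ and using $1/(1+|r_{ij}|)\ge1/(1+\delta)$ together with $2/(1+\delta)>1$, the contribution of these pairs is dominated by $t_0\,(\max_i p_i)^{2/(1+\delta)-1-\epsilon}\sum_i p_i$ for any fixed small $\epsilon>0$, which vanishes because $2/(1+\delta)-1-\epsilon>0$, $\max_i p_i\to0$, and $\sum_i p_i$ is bounded. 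For long lags $t>t_0$, Berman's condition $\rho_t\log t\to0$ makes $\exp\!\bigl(-(c_i^{2}+c_j^{2})/(2(1+|r_{ij}|))\bigr)$ only a negligible inflation of $p_ip_j$, and summing $|r_{ij}|\,p_ip_j$ over such pairs, again using $\sum_i p_i=O(1)$, shows this contribution is $o(1)$ as well. Hence the double sum $\to0$, which gives the first assertion. I expect this step to be the main obstacle: it amounts to balancing the Berman decay rate $\rho_t$ against the $O(N^{2})$ pairs and the level-dependent boundaries, and the two hypotheses on the $c_i$ --- that $\min_i c_i$ is of order $(\log N)^{1/2}$ and that $\sum_i(1-\Phi(c_i))$ stays bounded --- are exactly what is needed to make both the short- and long-range pieces close.

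For the final assertion, write $\prod_{i=1}^{N}\Phi(c_i)=\prod_{i=1}^{N}(1-p_i)$. Since $\max_{1\le i\le N}p_i\le 1-\Phi(C_N)\to0$ and $\sum_{i=1}^{N}p_i\to\jmath$, the expansion $\log(1-p_i)=-p_i+O(p_i^{2})$, valid uniformly once $\max_i p_i$ is small, gives $\sum_{i=1}^{N}\log(1-p_i)=-\sum_{i}p_i+O\!\bigl(\max_i p_i\cdot\sum_i p_i\bigr)\to-\jmath$, so $\prod_{i=1}^{N}\Phi(c_i)\to e^{-\jmath}$; combined with the first assertion, $P\bigl[\bigcap_{i=1}^{N}\{\xi_i\le c_i\}\bigr]\to e^{-\jmath}$.
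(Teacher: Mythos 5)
Your overall strategy coincides with the paper's: the paper offers no independent argument for this proposition, but simply invokes Lemma 6.1.1 (the normal comparison lemma) and Theorem 6.1.3 of \cite{leadbetter1983extremes}, the latter being precisely the nonstationary, varying-level statement of the first display under the hypotheses ``$\min_i c_i\ge c(\log N)^{1/2}$'' and ``$\sum_i(1-\Phi(c_i))$ bounded'' (with $N=\lfloor d^{\beta}\rfloor$); the only step the paper adds is the elementary passage $\prod_i\Phi(c_i)\to e^{-\jmath}$, which you carry out in exactly the same way and correctly. So what you propose is essentially a re-derivation of the cited Theorem 6.1.3 from the comparison lemma, rather than a different route.

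In that re-derivation there is one step that, as written, does not go through: the long-lag claim that Berman's condition makes $\exp\bigl(-(c_i^{2}+c_j^{2})/(2(1+|r_{ij}|))\bigr)$ ``only a negligible inflation of $p_ip_j$.'' Nothing in the hypotheses bounds the $c_i$ from above, and the inflation factor is of order $(p_ip_j)^{-\rho}$ with $\rho=\sup_{t>t_0}\rho_t$; when some $c_i\gg(\log N)^{1/2}$ this factor is not uniformly close to $1$, and your summation step, which leans on $\sum_ip_i=O(1)$, silently requires control of $\sum_ip_i^{1-\rho}$ instead. The standard repairs are either (i) a preliminary truncation: replace $c_i$ by $\min(c_i,b_N)$ with $N(1-\Phi(b_N))\to0$ (e.g.\ $1-\Phi(b_N)=(N\log N)^{-1}$), which changes both the joint probability and the product by at most $N(1-\Phi(b_N))=o(1)$ and caps $c_i^{2}\le(2+o(1))\log N$, after which $\rho\log N\to0$ makes the inflation factor tend to $1$ uniformly; or (ii) H\"older, via $\sum_ip_i^{1-\rho}\le(\sum_ip_i)^{1-\rho}N^{\rho}$ with $N^{\rho}\to1$. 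Relatedly, the short-lag/long-lag balance needs $t_0$ to be a sufficiently small power of $N$ (small enough, depending on $c$ and $\delta$, that $t_0(\max_ip_i)^{2/(1+\delta)-1-\epsilon}\to0$, yet tending to infinity so that $\sup_{t>t_0}\rho_t\log t_0\to0$); your ``slowly growing $t_0$'' leaves this interplay implicit. With these points filled in, your outline is the standard proof of the quoted theorem; the shorter path, and the one the paper takes, is simply to cite it.
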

Proposition~\ref{pro:01} is used to control the right side of (\ref{eq:h}) through the following lemma.
\begin{lemma} Suppose that $\xi_i\sim N(0, \delta_{i,i})$ satisfies the mixing condition (\ref{mixing}), where
$\delta_{ij}$ is the covariance of the normal sequence $\{\xi_i\}$, $i, j=1,\ldots,\lfloor d^\beta \rfloor$.
 If
$C_{\lfloor d^\beta \rfloor}\geq (\log (\lfloor d^\beta \rfloor))^{\delta}\mbox{max}_{1\leq i\leq
\lfloor d^\beta \rfloor}\delta^{\frac{1}{2}}_{ii}$, where $\delta \in(\frac{1}{2},\infty)$,
then
\begin{eqnarray*}
C_{\lfloor d^\beta \rfloor}^{-1} \mbox{max}_{1\leq i\leq \lfloor d^\beta \rfloor} |\xi_i|\xrightarrow{p} 0, as
\;d\rightarrow \infty .
\end{eqnarray*}\label{lem:01}
\end{lemma}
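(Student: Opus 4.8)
\emph{Proof proposal.} The plan is to reduce the statement to the behaviour of the maximum of a \emph{standard} normal sequence and then invoke Proposition~\ref{pro:01}, which already encapsulates the dependence through the mixing condition. First I would normalize: set $\tilde\xi_i=\xi_i/\delta_{ii}^{1/2}$ for $i=1,\dots,\lfloor d^\beta\rfloor$. Then $\{\tilde\xi_i\}$ is a standard normal sequence whose covariances are $\delta_{ij}/(\delta_{ii}^{1/2}\delta_{jj}^{1/2})$, which by the mixing condition~\eqref{mixing} on $\{\xi_i\}$ are bounded in absolute value by $\rho_{|i-j|}$; hence $\{\tilde\xi_i\}$ again satisfies~\eqref{mixing}, with the \emph{same} sequence $\rho_t$ (so the hypothesis is uniform in $d$). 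Because $|\xi_i|=\delta_{ii}^{1/2}|\tilde\xi_i|\le\bigl(\max_{1\le j\le\lfloor d^\beta\rfloor}\delta_{jj}^{1/2}\bigr)|\tilde\xi_i|$, the assumed lower bound on $C_{\lfloor d^\beta\rfloor}$ gives
\begin{equation*}
\frac{1}{C_{\lfloor d^\beta\rfloor}}\max_{1\le i\le\lfloor d^\beta\rfloor}|\xi_i|\;\le\;\frac{\max_{j}\delta_{jj}^{1/2}}{C_{\lfloor d^\beta\rfloor}}\max_{1\le i\le\lfloor d^\beta\rfloor}|\tilde\xi_i|\;\le\;\frac{1}{(\log\lfloor d^\beta\rfloor)^{\delta}}\max_{1\le i\le\lfloor d^\beta\rfloor}|\tilde\xi_i|,
\end{equation*}
so it suffices to prove $(\log\lfloor d^\beta\rfloor)^{-\delta}\max_{i}|\tilde\xi_i|\xrightarrow{p}0$.

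Fix $\epsilon>0$ and abbreviate $N=\lfloor d^\beta\rfloor$. I would apply Proposition~\ref{pro:01} to $\{\tilde\xi_i\}$ with the constant threshold sequence $c_i\equiv\epsilon(\log N)^{\delta}$. Its hypotheses hold for $d$ large: $\min_i c_i=\epsilon(\log N)^{\delta}\ge(\log N)^{1/2}$ since $\delta>\tfrac12$; and, using the tail bound $1-\Phi(x)\le(x\sqrt{2\pi})^{-1}e^{-x^2/2}$,
\begin{equation*}
\sum_{i=1}^{N}\bigl(1-\Phi(c_i)\bigr)=N\bigl(1-\Phi(\epsilon(\log N)^{\delta})\bigr)\le\frac{1}{\epsilon\sqrt{2\pi}\,(\log N)^{\delta}}\exp\!\Bigl(\log N-\tfrac{\epsilon^{2}}{2}(\log N)^{2\delta}\Bigr)\longrightarrow0,
\end{equation*}
because $2\delta>1$ makes $(\log N)^{2\delta}$ dominate $\log N$. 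Hence Proposition~\ref{pro:01} applies with $\jmath=0$, so $P\bigl[\bigcap_{i=1}^{N}\{\tilde\xi_i\le c_i\}\bigr]\to1$; running the same argument on $\{-\tilde\xi_i\}$ (which satisfies the same mixing condition) and taking a union bound,
\begin{equation*}
P\Bigl(\max_{1\le i\le N}|\tilde\xi_i|>\epsilon(\log N)^{\delta}\Bigr)\le\Bigl(1-P\bigl[\textstyle\bigcap_{i=1}^{N}\{\tilde\xi_i\le c_i\}\bigr]\Bigr)+\Bigl(1-P\bigl[\textstyle\bigcap_{i=1}^{N}\{-\tilde\xi_i\le c_i\}\bigr]\Bigr)\longrightarrow0.
\end{equation*}
As $\epsilon>0$ was arbitrary, $(\log N)^{-\delta}\max_i|\tilde\xi_i|\xrightarrow{p}0$, which combined with the display of the previous paragraph proves the lemma.

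I expect the only genuine computation to be the tail estimate in the last display: the hypothesis $\delta>\tfrac12$ is used precisely so that a Gaussian deviation of order $(\log N)^{\delta}$ has probability decaying fast enough to survive a union over $N=\lfloor d^\beta\rfloor$ coordinates; everything else is rescaling and bookkeeping, the dependence having been absorbed once and for all into Proposition~\ref{pro:01} via the mixing condition. (The degenerate case $\beta=0$, where $N=1$ and $(\log N)^{\delta}=0$, falls outside this argument and is instead subsumed by the extreme-sparsity analysis leading to Theorem~\ref{Th:01}.)
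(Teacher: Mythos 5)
Your proposal is correct and follows essentially the same route as the paper's proof: standardize to $\xi_i\delta_{ii}^{-1/2}$, use the hypothesis on $C_{\lfloor d^\beta\rfloor}$ to reduce to a constant threshold of order $(\log\lfloor d^\beta\rfloor)^{\delta}$, verify $\sum_i(1-\Phi(\cdot))\to 0$ via the Gaussian tail (where $\delta>\tfrac12$ enters), invoke Proposition~\ref{pro:01}, and handle absolute values by symmetry and a union bound. Your write-up is just slightly more explicit (e.g.\ checking that the standardized sequence inherits the mixing condition), but there is no substantive difference.
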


\begin{proof} Note that for every $\tau>0$
\begin{eqnarray}\label{eq:02}
 && P\left[ C_{\lfloor d^\beta \rfloor}^{-1} \mbox{max}_{1\leq i\leq \lfloor d^\beta \rfloor} |\xi_i|
>\tau \right]= P\left[ \mbox{max}_{1\leq i\leq \lfloor d^\beta \rfloor} |\xi_i|
> C_{\lfloor d^\beta \rfloor}\tau \right] \\
 \nonumber&& \leq P\left[ \left\{\mbox{max}_{1\leq i\leq \lfloor d^\beta \rfloor}  \xi_i
> C_{\lfloor d^\beta \rfloor}\tau \right\}\bigcup\left\{\mbox{max}_{1\leq i\leq \lfloor d^\beta \rfloor} (-\xi_i)
> C_{\lfloor d^\beta \rfloor}\tau \right\}\right] \\ \nonumber
&&\leq P\left[ \mbox{max}_{1\leq i\leq \lfloor d^\beta \rfloor} \xi_i
> C_{\lfloor d^\beta \rfloor}\tau \right] +P\left[ \mbox{max}_{1\leq i\leq {\lfloor d^\beta \rfloor}} (-\xi_i)
> C_{\lfloor d^\beta \rfloor}\tau \right]\\ \nonumber
&&= 2P\left[ \mbox{max}_{1\leq i\leq \lfloor d^\beta \rfloor} \xi_i
> C_{\lfloor d^\beta \rfloor}\tau \right]    \\ \nonumber
&&\leq 2\left(1-P\left[ \bigcap_{i=1}^{\lfloor d^\beta \rfloor}
\left\{\xi_i \delta_{ii}^{-\frac{1}{2}} \leq c (log (\lfloor d^\beta \rfloor))^{\delta}\right\}\right]\right),
\end{eqnarray}
where $c$ is a positive constant. Since
\begin{equation*}
\sum_{i=1}^{\lfloor d^\beta \rfloor} \left(1-\Phi\left(c
(\log (\lfloor d^\beta \rfloor))^{\delta}\right)\right)\longrightarrow 0, as\;
d\rightarrow\infty,
\end{equation*}
it then follows from Proposition~\ref{pro:01} that
\begin{eqnarray}
P\left[ \bigcap_{i=1}^{\lfloor d^\beta \rfloor} \left\{\xi_i \delta_{ii}^{-\frac{1}{2}} \leq c
(\log (\lfloor d^\beta \rfloor))^{\delta}\right\}\right]\longrightarrow 1, as\;
d\rightarrow\infty.\label{eq:03}
\end{eqnarray}
From (\ref{eq:02}) and (\ref{eq:03}), we can get
\begin{eqnarray*}
C_{\lfloor d^\beta \rfloor}^{-1} \mbox{max}_{1\leq i\leq \lfloor d^\beta \rfloor} |\xi_i|\xrightarrow{p} 0, as
\;d\rightarrow \infty.
\end{eqnarray*}
\end{proof}

Now we will begin the proof of Theorem~\ref{Th:02} and Theorem~\ref{corr:01}.  Denote
$\tilde{X}_i=(x_{i,1},\cdot\cdot\cdot,x_{i,n})^T$,
$\tilde{Z}_i=(z_{i,1},\cdot\cdot\cdot,z_{i,n})^T$,
$\tilde{W}_i=(w_{i,1},\cdot\cdot\cdot,w_{i,n})^T$and
$\tilde{H}_i=(h_{i,1},\cdot\cdot\cdot,h_{i,n})$, $i=1,\cdot\cdot\cdot,d$.

Note that
\begin{eqnarray}
|<\hat{u}^{\rm ST}_1, u_1>|=\frac{|\sum_{i=1}^{[d^\beta]}
\breve{u}_{i,1}u_{i,1}
|}{\sqrt{\sum_{i=1}^{d}(\breve{u}_{i,1})^2 }}=\frac{\lambda_1^{-\frac{1}{2}}|
\sum_{i=1}^{[d^\beta]}\breve{u}_{i,1}u_{i,1}|}{\lambda_1^{-\frac{1}{2}}
\sqrt{\sum_{i=1}^{d}(\breve{u}_{i,1})^2 }}.\label{innerproduct}
\end{eqnarray}
Below we need to bound the denominator and the numerator of~\eqref{innerproduct}.\\

We start with the numerator. Since $\tilde{X}_i=u_{i,1}\tilde{Z}_1+\tilde{H}_i, i=1,\cdot\cdot\cdot,d$,
it follows that $\tilde{v}^T_1\tilde{X}_i=u_{i,1}\tilde{v}^T_1\tilde{Z}_1+\tilde{v}^T_1\tilde{H}_i
$, which yields
\begin{eqnarray*}
\breve{u}_{i,1}&=&u_{i,1}\tilde{v}^T_1\tilde{Z}_11_{\{|\tilde{v}^T_1\tilde{X}_i|>\lambda\}}
+\tilde{v}^T_1\tilde{H}_i1_{\{|\tilde{v}^T_1\tilde{X}_i|>\lambda\}}\\
&=&u_{i,1}\tilde{v}^T_1\tilde{Z}_1+u_{i,1}\tilde{v}^T_1\tilde{Z}_11_{\{|\tilde{v}^T_1\tilde{X}_i|\leq\lambda\}}
+\tilde{v}^T_1\tilde{H}_i1_{\{|\tilde{v}^T_1\tilde{X}_i|>\lambda\}},
\end{eqnarray*}
and
\begin{eqnarray*}
\sum_{i=1}^{[d^\beta]}\breve{u}_{i,1}u_{i,1}&=&\sum_{i=1}^{[d^\beta]}u^2_{i,1}\tilde{v}^T_1\tilde{Z}_11_{\{|\tilde{v}^T_1\tilde{X}_i|>
\lambda\}}+\sum_{i=1}^{[d^\beta]}u_{i,1}\tilde{v}^T_1\tilde{H}_i1_{\{|\tilde{v}^T_1\tilde{X}_i|>\lambda\}}\\
&=&\tilde{v}^T_1\tilde{Z}_1+\sum_{i=1}^{[d^\beta]}u^2_{i,1}\tilde{v}^T_1\tilde{Z}_11_{\{|\tilde{v}^T_1\tilde{X}_i|\leq\lambda\}}
+\sum_{i=1}^{[d^\beta]}u_{i,1}\tilde{v}^T_1\tilde{H}_i1_{\{|\tilde{v}^T_1\tilde{X}_i|>\lambda\}}.
\end{eqnarray*}
It follows that
\begin{eqnarray}
 \label{upestimate:01}
 \lambda_1^{-\frac{1}{2}}|\sum_{i=1}^{[d^\beta]}\breve{u}_{i,1}u_{i,1}|
 &\leq&\lambda_1^{-\frac{1}{2}}\sum_{i=1}^{[d^\beta]}u^2_{i,1}|\tilde{v}^T_1\tilde{Z}_1|
+
\lambda_1^{-\frac{1}{2}}\sum_{i=1}^{[d^\beta]}|u_{i,1}
\tilde{v}^T_1\tilde{H}_i| \\ \nonumber
&=&|\tilde{v}^T_1\tilde{W}_1|+\sum_{i=1}^{[d^\beta]}\sum_{j=1}^{n}\lambda_1^{-\frac{1}{2}}|u_{i,1}h_{i,j}|,
\end{eqnarray}
and
\begin{eqnarray}
 \label{upestimate:02}
 && \lambda_1^{-\frac{1}{2}}|\sum_{i=1}^{[d^\beta]}\breve{u}_{i,1}u_{i,1}|
\\ \nonumber && \geq\lambda_1^{-\frac{1}{2}}|\tilde{v}^T_1\tilde{Z}_1|-\lambda_1^{-\frac{1}{2}}
\sum_{i=1}^{[d^\beta]}u^2_{i,1}|\tilde{v}^T_1\tilde{Z}_1|1_{\{|\tilde{v}^T_1\tilde{X}_i|\leq\lambda\}}
-\lambda_1^{-\frac{1}{2}}\sum_{i=1}^{[d^\beta]}|u_{i,1}\tilde{v}^T_1\tilde{H}_i| \\ \nonumber
&&\geq|\tilde{v}^T_1\tilde{W}_1|-|\tilde{v}^T_1\tilde{W}_1|
\sum_{i=1}^{[d^\beta]}u^2_{i,1}1_{\{|\tilde{v}^T_1\tilde{X}_i|\leq\lambda\}}
-\sum_{i=1}^{[d^\beta]}\sum_{j=1}^{n}\lambda_1^{-\frac{1}{2}}|u_{i,1}h_{i,j}|.
\end{eqnarray}

Next we will show that
\begin{eqnarray}
\sum_{i=1}^{[d^\beta]}\sum_{j=1}^{n}\lambda_1^{-\frac{1}{2}}|u_{i,1}h_{i,j}|
=o_p(d^{-\frac{\varsigma}{2}}), \mbox{where} \; \varsigma \in [0, \alpha-\eta-\theta).
\label{noiseestimate:01}
\end{eqnarray} Since
$H_j=(h_{1,j},\cdot\cdot\cdot,h_{d,j})^T=\sum_{k=2}^d z_{k,j}u_k,
j=1,\cdot\cdot\cdot,n $, it follows that $h_{i,j}=\sum_{k=2}^d
u_{i,k}z_{k,j}=\sum_{k=2}^d
u_{i,k}\lambda_k^{\frac{1}{2}}w_{k,j} \sim
\mbox{N}(0,\sigma^2_{i,j})$, where $\sigma^2_{i,j}\leq
\lambda_2$, $i=1,\cdot\cdot\cdot, [d^\beta]$,
$j=1,\cdot\cdot\cdot,n$. Thus, for fix $\tau$
\begin{eqnarray*}
&&P\left[\sum_{i=1}^{[d^\beta]}\sum_{j=1}^{n}d^{\frac{\varsigma}{2}}\lambda_1^{-\frac{1}{2}}|u_{i,1}h_{i,j}|\geq\tau\right]\leq
P\left[\bigcup_{i=1}^{[d^\beta]}\left\{\sum_{j=1}^{n}|u_{i,1}h_{i,j}|\geq d^{-\frac{\varsigma}{2}}\lambda_1^{\frac{1}{2}}\tau
u^2_{i,1}\right\}\right]\\
&&\leq\sum_{i=1}^{[d^\beta]}\sum_{j=1}^{n}
P\left[|h_{i,j}|\geq n^{-1} d^{-\frac{\varsigma}{2}} \lambda_1^{\frac{1}{2}}\tau
|u_{i,1}|\right]\leq\sum_{i=1}^{[d^\beta]}\sum_{j=1}^{n}
P\left[|h_{i,j}\sigma^{-1}_{i,j}|\geq c^*
d^{\frac{\alpha-\eta-\theta-\varsigma}{2}}\right]\\
&&=2n[d^\beta]\int_{c d^{(\alpha-\eta-\theta-\varsigma)/2}}^{+\infty}\frac{1}{\sqrt{2\pi}}
\mbox{exp}\left\{-\frac{x^2}{2}\right\}dx\longrightarrow 0, \mbox{as} \;d\rightarrow\infty,
\end{eqnarray*}
where $c$ is constant. Similar, we can show that
 \begin{eqnarray}
 \sum_{i=1}^{[d^\beta]}\lambda_1^{-1}(\sum_{j=1}^{n}|h_{i,j}|)^2=o_p(d^{\frac{\varsigma}{2}})
 ,\label{noiseestimate:02}
 \end{eqnarray}
and
\begin{eqnarray}
\sum_{i=[d^{\beta}]+1}^{d}\sum_{j=1}^{n} \lambda_1^{-\frac{1}{2}}|h_{i,j}|1_{\{\sum_{j=1}^{n}|h_{i,j}|>\lambda
\}}=o_p(d^{-\frac{\varsigma}{2}}),\label{noiseestimate:03}
\end{eqnarray}
 where $\varsigma \in [0, \alpha-\eta-\theta)$.

Finally, we want to show that
\begin{eqnarray}
\sum_{i=1}^{[d^\beta]}u^2_{i,1}
1_{\{|\tilde{v}^T_1\tilde{X}_i|\leq\lambda\}}=o_p(d^{-\frac{\varsigma^{'}}{2}})
,\label{noiseestimate:04}
\end{eqnarray}
where $\varsigma^{'}$ satisfies that $d^{\frac{\varsigma^{'}+\eta-\alpha}{2}}\lambda =o(1)$.
Since we can always find a subsequence of $\{\frac{\lambda_1}{\sum_{i=2}^d \lambda_i}\}$ and make it convergent to a nonnegative constant,
for simplicity, we just assume that  $\mbox{lim}_{d\rightarrow\infty}\frac{\lambda_1}{\sum_{i=2}^d \lambda_i}=C$. If $C=0$, then
the spike index $\alpha<1$, and Jung and Marron (2009)~\cite{jung2009pca}
showed that
\begin{eqnarray*}
 c_d^{-1} S_d &\xrightarrow{p}& I_n,  \;\mbox{as} \; d\rightarrow\infty,
\end{eqnarray*}
where $c_d=n^{-1}\sum_{i=1}^d \lambda_i$. Since the eigenvector
$\tilde{v}^T_1$ of $c_d^{-1} S_d$ can be chosen so that they are
continuous according to Acker (1974)~\cite{acker1974absolute}, it follows that
$\tilde{v}^T_1\Rightarrow v_1$, as $d\rightarrow \infty$, where
$\Rightarrow$ denotes the convergence in distribution and $v_1$ is
the first eigenvector of $n$-dimensional identity matrix. If $C=0$,
then the spike index $\alpha=1$ and Jung, Sen and Marron (2010)~\cite{Jung2010} showed that
$\tilde{v}^T_1\Rightarrow \frac{\tilde{W}_1}{\|\tilde{W}_1\|}$, as
$d\rightarrow \infty$. Therefore, we have
\begin{eqnarray}
\mid\tilde{v}^T_1\tilde{W}_1 \mid \Rightarrow \mid v^T_1\tilde{W}_1
\mid\mbox{\rm or}\; \| \tilde{W}_1\|, \mbox{as}\; d\rightarrow\infty.
\label{v:convergence}
\end{eqnarray}
Since $d^{\frac{\varsigma^{'}+\eta-\alpha}{2}}\lambda =o(1)$,  $
d^{\frac{\varsigma^{'}+\eta-\alpha}{2}}\sum_{j=1}^{n}\mbox{max}_{1\leq i\leq
[d^\beta]}|h_{i,j}|=o_p(1)
$,
and
\begin{eqnarray*}
&&\sum_{i=1}^{[d^\beta]} d^{\frac{\varsigma^{'}}{2}}u^2_{i,1}
1_{\left\{|\tilde{v}^T_1\tilde{X}_i|\leq\lambda\right\}}\leq \sum_{i=1}^{[d^\beta]} d^{\frac{\varsigma^{'}}{2}} u^2_{i,1}
1_{\left\{|u_{i,1}\tilde{v}^T_1\tilde{Z}_1|\leq|\tilde{v}^T_1\tilde{H}_i|+\lambda\right\}}
\\ &&\leq
 \sum_{i=1}^{[d^\beta]} d^{\frac{\varsigma^{'}}{2}} u^2_{i,1}
1_{\left\{|\tilde{v}^T_1\tilde{W}_1|\leq
\lambda_1^{-\frac{1}{2}}\mbox{max}_{1\leq i\leq
[d^\beta]}{|u_{i,1}|}^{-1}\left(\sum_{j=1}^{n}\mbox{max}_{1\leq i\leq
[d^\beta]}|h_{i,j}|+\lambda\right)\right\}}\\
&&\leq\frac{c d^{\frac{\varsigma^{'}+\eta-\alpha}{2}}\sum_{j=1}^{n}\mbox{max}_{1\leq i\leq [d^\beta]}|h_{i,j}|
+c d^{\frac{\varsigma^{'}+\eta-\alpha}{2}}
\lambda}{|\tilde{v}^T_1\tilde{W}_1|},
\end{eqnarray*}
where $c$ is a constant, it follows that  (\ref{noiseestimate:04}) is established.

Then, from (\ref{upestimate:01}), (\ref{upestimate:02}), (\ref{noiseestimate:01}), and (\ref{noiseestimate:04}), we obtain the following result about the numerator
\begin{eqnarray}
\lambda_1^{-\frac{1}{2}}|\sum_{i=1}^{[d^\beta]}\breve{u}_{i,1}u_{i,1}|
=|\tilde{v}^T_1 \tilde{W}_1|+o_p(d^{-\frac{\mbox{min}\{\varsigma,\varsigma^{'}\}}{2}}).\label{converge:up}
\end{eqnarray}

Similarly for the denominator, we have
\begin{eqnarray}
\label{lowestimate:01}
&&\lambda_1^{-\frac{1}{2}}\sqrt{\sum_{i=1}^{d}\breve{u}_{i,1}^2} \leq \lambda_1^{-\frac{1}{2}}\sqrt{\sum_{i=1}^{[d^\beta]}\breve{u}_{i,1}^2}
+\lambda_1^{-\frac{1}{2}}\sqrt{\sum_{i=[d^\beta]+1}^{d}\breve{u}_{i,1}^2}
 \\ \nonumber
&&\leq\lambda_1^{-\frac{1}{2}}\sqrt{\sum_{i=1}^{[d^\beta]}\left(u_{i,1}\tilde{v}^T_1\tilde{Z}_1\right)^2
}+\lambda_1^{-\frac{1}{2}}\sqrt{\sum_{i=1}^{[d^\beta]}(\tilde{v}^T_1\tilde{H}_i)^2}+\lambda_1^{-\frac{1}{2}}
\sum_{i=[d^\beta]+1}^{d}|\breve{u}_{i,1}|
\\ \nonumber && =|\tilde{v}^T_1\tilde{W}_1|+\sqrt{\sum_{i=1}^{[d^\beta]}\lambda_1^{-1}(\sum_{j=1}^{n}|h_{i,j}|)^2}
+\\ \nonumber
&&+
\sum_{i=[d^{\beta}]+1}^{d}\sum_{j=1}^{n}\lambda_1^{-\frac{1}{2}}|h_{i,j}|1_{\{\sum_{j=1}^{n}|h_{i,j}|>\lambda\}},
\end{eqnarray}
and
\begin{eqnarray}
\label{lowestimate:02}
 &&\lambda_1^{-\frac{1}{2}}\sqrt{\sum_{i=1}^{d}\breve{u}_{i,1}^2}\geq
\lambda_1^{-\frac{1}{2}}\sqrt{\sum_{i=1}^{[d^\beta]}\breve{u}_{i,1}^2} \\ \nonumber
&&
\geq|\tilde{v}^T_1\tilde{W}_1|-|\tilde{v}^T_1\tilde{W}_1|\sqrt{\sum_{i=1}^{[d^\beta]}u^2_{i,1}1_{\{|\tilde{v}^T_1\tilde{X}_i|\leq
\lambda\}}}
-\sqrt{\sum_{i=1}^{[d^\beta]}\lambda_1^{-1}(\sum_{j=1}^{n}|h_{i,j}|)^2}.
\end{eqnarray}

Combining (\ref{lowestimate:01}), (\ref{lowestimate:02}), (\ref{noiseestimate:02}), (\ref{noiseestimate:03})  and
(\ref{noiseestimate:04}), we have
\begin{eqnarray}
\lambda_1^{-\frac{1}{2}}\sqrt{\sum_{i=1}^{d}\breve{u}_{i,1}^2}
=|\tilde{v}^T_1 \tilde{W}_1|+o_p(d^{-\frac{\mbox{min}\{\varsigma,\varsigma^{'}\}}{2}}).\label{converge:down}
\end{eqnarray}

Furthermore, (\ref{innerproduct}), (\ref{v:convergence}), (\ref{converge:up}), and (\ref{converge:down}) suggest that
\begin{eqnarray*}
|<\hat{u}^{\rm ST}_1, u_1>|=\frac{|\tilde{v}^T_1 \tilde{W}_1|+o_p(d^{-\frac{\mbox{min}\{\varsigma,\varsigma^{'}\}}{2}})}
{|\tilde{v}^T_1 \tilde{W}_1|+o_p(d^{-\frac{\mbox{min}\{\varsigma,\varsigma^{'}\}}{2}})}
=1 +o_p(d^{-\frac{\mbox{min}\{\varsigma,\varsigma^{'}\}}{2}}),
\label{}
\end{eqnarray*}
which means
that $\hat{u}^{\rm ST}_1$ is consistent with $u_1$ with convergence rate $d^{-\frac{\mbox{min}\{\varsigma,\varsigma^{'}\}}{2}}$. This concludes the proof for Theorem~\ref{Th:02}.\\

In addition, note that $d^{\frac{\varsigma^{'}+\eta-\alpha}{2}}\lambda =o(1)$. If
$\lambda=o(d^{\frac{\alpha-\eta-\varsigma}{2}})$, then we can take $\varsigma^{'}=\varsigma$.
Then $\hat{u}^{\rm ST}_1$ is consistent with $u_1$ with convergence rate $d^{\frac{\varsigma}{2}}$.
This finishes the proof of Theorem~\ref{corr:01}.

\subsection{Proofs of Theorem~\ref{Th:03}, \ref{corr:02},
\ref{Th:pc}, \ref{Th:04} and \ref{corr:03}}\label{subsec:72}
The proofs of Theorems~\ref{Th:03}, \ref{corr:02},
 \ref{Th:04} and \ref{corr:03} are modifications of the
proofs of Theorems~\ref{Th:02} and~\ref{corr:01}. These are provided in the supplementary material, available online at~\cite{shen2011proof}.
The proof of Theorem~\ref{Th:pc} is also given in the supplement.

\subsection{Proof of Theorem~\ref{inconsistent}}\label{subsubsec:73}
Since $X_j^*=(I_{\lfloor d^\beta \rfloor}, (0)_{\lfloor d^\beta \rfloor \times (d-\lfloor d^\beta \rfloor)})X_j$, where
$I_{\lfloor d^\beta \rfloor}$ denotes the $\lfloor d^\beta \rfloor$-dimensional identity matrix and
$(0)_{\lfloor d^\beta \rfloor \times (d-\lfloor d^\beta\rfloor)}$ is the
$\lfloor d^\beta \rfloor$-by-$(d-\lfloor d^\beta\rfloor)$
zero matrix, $j=1,\ldots,n$, it follows that
\begin{eqnarray*}
\Sigma^*_{\lfloor d^\beta\rfloor}=(I_{\lfloor d^\beta \rfloor}, (0)_{\lfloor d^\beta \rfloor \times (d-\lfloor d^\beta \rfloor)})\Sigma_d
(I_{\lfloor d^\beta \rfloor}, (0)_{\lfloor d^\beta \rfloor \times (d-\lfloor d^\beta \rfloor)})^T,
 \end{eqnarray*}
which yields
\begin{eqnarray*}
\lambda^*_1=\lambda_1,  \lambda_2\geq\lambda^*_i\geq \lambda_d, j=2,\ldots,\lfloor d^\beta\rfloor.
\end{eqnarray*}
Therefore,
\begin{eqnarray}
\frac{\sum_{i=2}^{\lfloor d^\beta\rfloor}{\lambda^*_i}^2}{(\sum_{i=2}^{\lfloor d^\beta\rfloor}\lambda^*_i)^2}
\leq \frac{\lfloor d^\beta\rfloor\lambda^2_2}{(\lfloor d^\beta\rfloor)^2\lambda^2_d}=\frac{O(d^\beta)O(d^{2\theta})}{O(d^{2\beta})}
=o(1),\label{Jungcondition:2}
\end{eqnarray}
and
\begin{eqnarray}
\frac{\lambda^*_1}{\sum_{i=2}^{\lfloor d^\beta\rfloor}\lambda^*_i}\leq \frac{\lambda_1}{\lfloor d^\beta\rfloor \lambda_d}=\frac{O(d^\alpha)}{O(d^\beta)}=o(1).
\label{Jungcondition:1}
\end{eqnarray}
If we rescale $\lambda^*_i$, $i=1,\ldots,\lfloor d^\beta\rfloor$,
(\ref{Jungcondition:2}) satisfies the $\varepsilon_2$ assumption of
Jung and Marron (2009)~\cite{jung2009pca} and (\ref{Jungcondition:1}) satisfies the
assumption $\lambda_1=O(d^{\alpha})$ and
$\sum_{i=2}^{d}\lambda_i=O(d)$, where $\alpha<1$. For this case,
Jung and Marron (2009)~\cite{jung2009pca} have shown that $\hat{u}^*_1$ is strongly
inconsistent with $u^*_1$. This means that the oracle estimator
$\hat{u}^{\rm OR}_1$ is strongly inconsistent with $u_1$.

\section*{Acknowledgements}
See \ref{suppA} and~\ref{suppB} for the supplementary materials.

\begin{supplement}
\sname{Supplement A}\label{suppA}
\stitle{Proofs of Theorems 3.1, 3.2, 3.3, 3.4 and 3.5}
\slink[url]{http://www.unc.edu/~dshen/RSPCA/A.pdf}
\sdescription{Detailed technical proofs are provided for Theorems 3.1-3.5.}

\sname{Supplement B}\label{suppB}
\stitle{Additional Simulation Results}
\slink[url]{http://www.unc.edu/~dshen/RSPCA/B.html}
\sdescription{Additional simulation results are provided for the twenty spike index and sparsity index pairs, indicated in Figure~\ref{fig:01}.}
\end{supplement}

\bibliographystyle{imsart-number}
\bibliography{SPCA-HDLSS}

\end{document}